\documentclass[12pt]{article}
\usepackage{amsmath,amsthm,amsfonts,amssymb,mathrsfs,bm}
\usepackage[usenames]{color}
\usepackage{hyperref}
\usepackage{amssymb}

\oddsidemargin	0.635cm
\textwidth	15.3cm
\topmargin	-1cm
\textheight	21cm
\parindent	10pt
\parskip 	\bigskipamount

%%%%%%%%%%%THEOREMS%%%%%%%%%%%
\newtheorem{theorem}{Theorem}[section]

\newtheorem{corollary}[theorem]{Corollary}
\newtheorem{proposition}[theorem]{Proposition}

\newtheorem{remark}{Remark}[section]

\newtheorem{definition}{Definition}

\newtheorem{question}{Question}

\def\Z{\mathbb{Z}}
\def\R{\mathbb{R}}
\def\C{\mathbb{C}}
\def\N{\mathbb{N}}
\def\E{\mathbb{E}}
\def\P{\mathbb{P}}
\def\pf{\mathbb{P}^f}
\def\td{\mathbb{T}^d}
\def\t{\mathbb{T}}
\def\wt{\widetilde}
\def\ro{\rho}
\def\o{\omega}
\def\ou{\omega_{\mathrm{out}}}
\def\oin{\omega_{\mathrm{in}}}
\def\ze{x_0}
\def\ph{\varphi}
\def\var{\mathrm{Var}}
\def\sin{\mathrm{sin}}
\def\leb{\mathcal{L}}
\def\L{\Lambda}
\def\la{\lambda}
\def\eps{\epsilon}
\def\del{\delta}
\def\hf{\hat{f}}
\def\H{\mathcal{H}}
\def\u{\Upsilon}

\def\F{\mathfrak{F}}
\def\e{\mathcal{E}}
\def\G{\mathcal{G}}
\def\arg{\text{Arg}}
\def\hm{\text{HM}}
\def\el{\mathcal{L}}

\def\ol{\overline}
\def\q{\mathcal{Q}}
\def\U{\u_R}
\renewcommand{\l}[0]{\left }
\renewcommand{\r}[0]{\right}

\makeatletter
\renewcommand*{\@cite@ofmt}{\hbox}
\makeatother

\begin{document}
\title{Determinantal processes and completeness of random exponentials: the critical case}
\author{\href{httpRevision://www.princeton.edu/~sg18/}{Subhroshekhar Ghosh}\\ Princeton University \\ sg18@princeton.edu}
\date{}
\maketitle

\begin{abstract}
 For a locally finite point set $\L \subset \R$, consider the collection of exponential functions given by $\e_{\L}:=\{ e^{i \la x } : \la \in \L\}$. We examine the question whether $\e_{\L}$ spans the Hilbert space $L^2[-\pi,\pi]$, when $\L$ is random. For several point processes of interest, this belongs to a certain critical case of the corresponding question for deterministic $\L$, about which little is known. For $\L$ the continuum sine kernel process, obtained as the bulk limit of GUE eigenvalues, we establish that $\e_{\L}$ is indeed complete almost surely. We also answer an analogous question on $\C$ for the Ginibre ensemble, arising as weak limits  of the spectra of certain non-Hermitian Gaussian random matrices. In fact we establish completeness for any ``rigid'' determinantal point process in a general setting. In addition, we partially answer two questions of Lyons and Steif about stationary determinantal processes on $\Z^d$. 
\end{abstract}

\newpage
\section{Introduction}

Any locally finite point set $\L \subset \R$ gives us a set of functions \[\e_{\L}:=\{ e_{\la} :{\la \in \L}\} \subset L^2[-\pi,\pi],\] where $e_{\la}(x) = e^{i\la x}$,  $i$ being the imaginary unit.  A set of vectors is said to ``span'' a Hilbert space if their closed linear span equals the Hilbert space under consideration.  
The following question is classical:
\begin{question}
\label{q0}
Does  $\e_{\L}$ span $L^2[-\pi,\pi]$ ?
\end{question}

An equivalent terminology found in the literature to describe the fact that $\e_{\L}$ spans $L^2[-\pi,\pi]$ is that $\e_{\L}$ is complete in $L^2[-\pi,\pi]$.
When $\L$ is deterministic, this a well studied problem in the literature. In the case where $\L$ is random, that is, $\L$ is a point process, the literature is far more limited (other than what can be deduced from the results in the deterministic setting). 
% Of course, Question \ref{q4} can be asked for any point process of unit intensity in $\R$. To the best of our knowledge, the answer is unknown even for the  standard Poisson process on $\R$ of unit intensity. 
For  any ergodic point process $\L$, it can be easily checked that the event in question has a 0-1 law. 

In this paper we provide a complete answer to Question \ref{q0} in the case where $\L$ is the continuum sine kernel process (see Section \ref{defs} for a precise definition):
\begin{theorem}
 \label{sin}
When $\L$ is a realisation of the continuum sine kernel process on $\R$, almost surely $\e_{\L}$ spans $L^2[-\pi,\pi]$. 
\end{theorem}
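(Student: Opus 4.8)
The plan is to convert the spanning question into a uniqueness problem for the Paley–Wiener space, reduce it to an impossibility statement by ergodicity, and then derive a contradiction from the number rigidity of the sine process.

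First I would reformulate analytically. A function $f\in L^2[-\pi,\pi]$ is orthogonal to every $e_\la$ with $\la\in\L$ exactly when its (conjugate) Fourier transform $F(z)=\int_{-\pi}^{\pi}f(x)e^{-izx}\,dx$ satisfies $F(\la)=0$ for all $\la\in\L$. By the Paley–Wiener theorem $F$ ranges over the space $PW_\pi$ of entire functions of exponential type at most $\pi$ whose restriction to $\R$ is square integrable, and every such $F$ arises this way. Thus $\e_\L$ fails to span $L^2[-\pi,\pi]$ if and only if $\L$ is \emph{not} a uniqueness set for $PW_\pi$, i.e.\ there is a nonzero $F\in PW_\pi$ vanishing on all of $\L$. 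Because the continuum sine process is ergodic under the translation group of $\R$, this event has probability $0$ or $1$, so it is enough to prove that it cannot occur with probability one. I therefore assume for contradiction that, almost surely, there exists a nonzero $F=F_\L\in PW_\pi$ with $F_\L|_\L\equiv 0$.

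Next I would exploit the matching of densities. A nonzero element of $PW_\pi$ has exponential type $\pi$, and by standard zero-counting for functions of exponential type its real zeros have density at most one per unit length; meanwhile the sine process has one-point intensity $K(x,x)=1$, so $\L$ itself has density $1$. Consequently the inclusion $\L\subseteq Z(F_\L)$ together with the density bound forces $Z(F_\L)$ to have density exactly $1$ and to coincide with $\L$ up to a set of density zero. In other words $F_\L$ is, up to negligible extra zeros and the usual bounded ambiguity of the indicator diagram, the \emph{canonical generating function} of the configuration $\L$: the density-$1$ zero set of $\L$ already saturates the entire zero budget available to a function of type $\pi$, leaving essentially no freedom. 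This rigidity of $F_\L$ as a function of $\L$ is what I would play against the probabilistic rigidity of the process.

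The crux, and the step I expect to be the main obstacle, is to extract a contradiction from number rigidity. Fix a large window $B=[-R,R]$ and condition on the configuration $\L\cap B^{c}$. The sine process is number rigid, so $N:=\#(\L\cap B)$ is almost surely a measurable function of $\L\cap B^{c}$; however, conditionally on $\L\cap B^{c}$ the interior configuration $\L\cap B$ is \emph{not} frozen but is distributed according to a non-degenerate $N$-point law that is mutually absolutely continuous as the $N$ interior points move. Comparing two interior configurations $\omega_1,\omega_2$ with the same exterior, both in the support of this conditional law, produces two generating functions $F_{\omega_1},F_{\omega_2}\in PW_\pi$ that share the density-$1$ exterior zero set $\L\cap B^{c}$ but must vanish on different interior point sets. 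Because a density-$1$ exterior zero set already pins down an element of $PW_\pi$ up to its finitely many interior zeros, whose \emph{number} is fixed by rigidity, the freedom to also annihilate a genuinely different interior configuration is incompatible with membership in $PW_\pi$. Making this tension quantitative — controlling the residual ambiguity in $F_\L$ given the exterior and showing that the conditional interior law genuinely charges configurations that no single admissible generating function can accommodate — is the hard part, and is exactly where the number rigidity of the sine process enters as the decisive input.

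Finally I would note the structural reason the statement is so clean and how it fits the general framework of the paper. The completeness interval $[-\pi,\pi]$ is precisely the spectral support of the sine kernel, whose Fourier transform is $\mathbf 1_{[-\pi,\pi]}$; this matching is what makes the critical density equal $1$ and simultaneously underlies rigidity, since the associated linear-statistics multiplier vanishes at the frequency origin. The argument above uses only that the process is number rigid at the critical density, so the same scheme yields completeness for any rigid determinantal process in the general setting announced in the abstract, with the sine process being the principal instance.
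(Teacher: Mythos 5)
Your reduction to the Paley--Wiener uniqueness problem, the $0$--$1$ law, and the density considerations are all sound (and match the paper's own framing under Fourier conjugation). The proof fails at exactly the step you flag as ``the hard part,'' and the failure is not a matter of missing quantitative details: the incompatibility you want to exploit is false. The space $PW_\pi$ is stable under relocating finitely many zeros: if $F\in PW_\pi$ and $F(\la)=0$, then $G(z)=F(z)\frac{z-a}{z-\la}$ is entire of exponential type at most $\pi$, and since $(z-a)/(z-\la)$ is bounded on $\R$ outside a neighbourhood of $\la$ while $F(z)/(z-\la)$ is bounded near $\la$, we get $G\in L^2(\R)$, hence $G\in PW_\pi$, and $G$ vanishes on $\bigl(Z(F)\setminus\{\la\}\bigr)\cup\{a\}$. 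Iterating, if the exterior configuration $\L\cap B^{c}$ together with one interior configuration of cardinality $N$ is contained in the zero set of some nonzero $PW_\pi$ function, then the same is true for $\L\cap B^{c}$ together with \emph{any} interior configuration with at most $N$ points. (Concretely, $\frac{\sin\pi z}{\pi z}$ and $\frac{\sin\pi z}{\pi(z-2)}$ both lie in $PW_\pi$, share all zeros outside $(-5/2,5/2)$, and have different four-point interior zero sets.) Hence the event ``there exists a nonzero $F\in PW_\pi$ vanishing on $\L$'' is invariant under moving finitely many points of $\L$, and no comparison of two interior configurations over the same exterior --- whatever the conditional law charges --- can yield a contradiction. (Your appeal to mutual absolute continuity of the conditional interior law is also an unproven extra input, the ``tolerance'' property, which this paper neither proves nor uses; but even granting it, the argument collapses for the reason above.)

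This is precisely why rigidity must enter through a different mechanism, and the paper's route does not argue about zero sets of individual $PW_\pi$ functions at all. It works with the kernel functions $K(\cdot,x)$ and uses the identity that the Palm (conditional) intensity $p(x|x_1,\dots,x_n)=\mathcal{D}(x,x_1,\dots,x_n)/\mathcal{D}(x_1,\dots,x_n)$ equals the squared distance from $K(\cdot,x)$ to the span of $\{K(\cdot,x_i)\}_{i=1}^n$. Negative association (Theorem \ref{cna}, through Propositions \ref{techprop1} and \ref{techprop2}) dominates the Palm expectation of the number of points in a small ball $B_{\eps}$ by the conditional expectation given the exterior configuration; rigidity of the sine process (Theorem \ref{contrig}) together with martingale convergence (Proposition \ref{regularity}) forces that conditional expectation to vanish in the limit on the event that $B_{\eps}$ contains no points of $\L$; Fatou's lemma then gives $K(\cdot,x)\in\H_0$ for $\mu$-a.e. $x\in B_{\eps}$, and the reproducing-kernel property upgrades this to $\H_0=\H$. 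Note that this conclusion concerns the closed span, a quantity that is \emph{not} read off from the mere existence of admissible zero sets; any salvage of your function-theoretic scheme would need an input that fails to be invariant under finite perturbations of $\L$, which rules out using number rigidity in the naive ``compare two interiors'' way.
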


Similar questions can be asked in higher dimensions as well. On $\C$, we consider the analogous question with $\L$ coming from the  Ginibre ensemble (see Section \ref{defs} for a precise definition). An exponential function here is defined as $e_{\la}(z)= e^{\overline{\la}z}$ and the natural space in which to study completeness is the Fock-Bargmann space. The latter space is the closure of the set of polynomials (in one complex variable) in $L^2\l(\gamma\r)$ where $\gamma$ is the standard complex Gaussian measure on $\C$, having the density $\frac{1}{\pi}e^{-|z|^2}$ with respect to the Lebesgue measure. In this case we prove:
\begin{theorem}
\label{gin}
When $\L$ is a realisation of the Ginibre ensemble on $\C$, a.s. $\e_{\L}$ spans the Fock-Bargmann space. Equivalently, a.s. in $\L$ the following happens: if there is a function $f$ in the Fock-Bargmann space which vanishes at all the points of $\L$, then $f\equiv 0$.
\end{theorem}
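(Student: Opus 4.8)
The plan is to recast the completeness question as a uniqueness-set problem in the Fock--Bargmann space and then to exploit the \emph{rigidity} of the Ginibre ensemble, which is the feature that tips the balance in this critical regime. First I would use the reproducing kernel structure. Writing $\mathcal{F}$ for the Fock--Bargmann space, its reproducing kernel is $K(z,w)=e^{z\bar w}$, so that $e_{\la}(z)=e^{\bar\la z}=K(z,\la)=K_{\la}(z)$; the exponential $e_\la$ is precisely the reproducing kernel at $\la$. By the reproducing property $\langle f,K_\la\rangle=f(\la)$, a function $f\in\mathcal F$ is orthogonal to $\{K_\la:\la\in\L\}$ iff $f(\la)=0$ for every $\la\in\L$. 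Hence $\e_\L$ spans $\mathcal F$ iff the only $f\in\mathcal F$ vanishing on all of $\L$ is $f\equiv0$; this is exactly the equivalent formulation in the statement, and it reduces the theorem to showing that $\L$ is almost surely a uniqueness set for $\mathcal F$.

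Next I would install a $0$--$1$ law. The event $\{\L\ \text{is a uniqueness set}\}$ is measurable (using separability of $\mathcal F$ and a countable dense family) and invariant under the translations of the plane: the Weyl translations $W_a$, $(W_af)(z)=f(z-a)e^{\bar a z-|a|^2/2}$, act unitarily on $\mathcal F$ and send $K_w$ to a unimodular multiple of $K_{w+a}$, so $\L$ is a uniqueness set iff $\L+a$ is. Since the infinite Ginibre ensemble is translation invariant and ergodic, the probability of this event is $0$ or $1$ (the $0$--$1$ law already noted in the Introduction for ergodic processes), and it suffices to rule out the possibility that almost surely there exists a nonzero $f\in\mathcal F$ vanishing on $\L$.

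The heart of the matter is why this is the critical case. If $f\in\mathcal F$ vanishes on $\L$, then every point of $\L$ is a zero of $f$, so the zero-counting function obeys $n_f(R)\ge N(D_R)$, where $N(D_R)$ is the number of Ginibre points in the disk $D_R$ of radius $R$. On the one hand, the pointwise bound $|f(z)|\le\|f\|_{\mathcal F}\,e^{|z|^2/2}$ together with Jensen's formula gives $\int_0^R n_f(r)\,r^{-1}\,dr\le \tfrac12 R^2+o(R^2)$. On the other hand, for the Ginibre ensemble $\E\,N(D_R)=R^2$ with $\var\,N(D_R)=\Theta(R)$. Thus the growth a Fock function can afford and the number of points it is asked to annihilate match to leading order, with the \emph{same} constant; a naive counting argument is exactly balanced and cannot decide the question. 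This is precisely where rigidity must enter.

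Finally I would use the Ghosh--Peres rigidity (and tolerance) of the Ginibre ensemble to break the tie. Number rigidity asserts that $N(D_R)$ is almost surely a measurable function of the configuration outside $D_R$, while tolerance asserts that, conditioned on the exterior and on $N(D_R)=k$, the interior points have a law mutually absolutely continuous with Lebesgue on the $k$-point configuration space. The strategy is: given a nonzero $f$ vanishing on $\L$, dividing out a zero shows $f/(z-\la_0)\in\mathcal F$ vanishes on $\L\setminus\{\la_0\}$, so deletion of an interior point strictly enlarges the orthogonal complement; iterating, the codimension of $\overline{\mathrm{span}}\,\e_\L$ increases by one per deleted point. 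Rigidity forbids simply deleting a point while fixing the exterior, whereas tolerance permits continuously \emph{perturbing} an interior point $\la_0\rightsquigarrow\la_0'$ to another admissible configuration, which by the $0$--$1$ law is again almost surely a uniqueness-failing configuration. Tracking the finite-dimensional space of functions vanishing on the common part $\L\setminus\{\la_0\}$ and letting $\la_0'$ range over an open set of admissible positions forces a nonzero Fock function to vanish on a set with an accumulation point, hence to vanish identically --- the desired contradiction. The main obstacle, and the step I expect to require the most care, is exactly this last one: converting the critical, perfectly balanced counting into a genuine contradiction by combining rigidity with tolerance, since every coarse density- or variance-based estimate is saturated, and only the fine interplay of ``the count is determined from outside'' with ``the interior may be perturbed'' has a chance of succeeding.
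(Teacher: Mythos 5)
Your opening reduction via the reproducing kernel is correct and matches the paper's (Section 2): completeness of $\e_{\L}$ is equivalent to $\L$ being a uniqueness set for the Fock--Bargmann space, and you correctly identify rigidity as the decisive property. The gap is in your final step. Let $V$ denote the space of Fock--Bargmann functions vanishing on $\L\setminus\{\la_0\}$. First, you assert that $V$ is finite-dimensional without proof; at critical density no Jensen-type counting yields this (your own ``heart of the matter'' paragraph explains that all such counting is exactly saturated), so this is a nontrivial missing claim. More seriously, even granting finite-dimensionality, the deduction collapses: under the failure hypothesis there is a nonzero $f$ vanishing on all of $\L$, and then both $f$ and $f(z)/(z-\la_0)$ lie in $V$, so $\dim V\ge 2$. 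But once $\dim V\ge 2$, for \emph{every} $w\in\C$ there is automatically a nonzero $h\in V$ with $h(w)=0$, since vanishing at $w$ is a single linear condition on a space of dimension at least two. Hence the statement that tolerance plus the $0$--$1$ law actually gives you --- ``for a.e.\ admissible $\la_0'$ the configuration $(\L\setminus\{\la_0\})\cup\{\la_0'\}$ fails uniqueness'' --- carries no information: it is automatically true and perfectly consistent with the failure hypothesis. It does not produce a single nonzero function vanishing on a set with an accumulation point, because the witnessing function changes with $\la_0'$; no contradiction is reached. Rigidity enters your sketch only as the rhetorical remark that one ``cannot delete a point,'' and is never actually used.

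The paper's argument uses rigidity positively and quantitatively, and needs neither tolerance nor any finite-dimensionality. The key identity is that the squared distance from $K(\cdot,x)$ to the span of $\{K(\cdot,x_1),\dots,K(\cdot,x_n)\}$ equals the Gram-determinant ratio $\mathcal{D}(x,x_1,\dots,x_n)/\mathcal{D}(x_1,\dots,x_n)$, which is precisely the Palm (conditional) intensity of the process at $x$ given points at $x_1,\dots,x_n$. Taking an empty ball $B_{\eps}$ and conditioning on the configuration $\o_R$ in a large annulus around it, negative association (Theorem \ref{cna}, through Propositions \ref{techprop1} and \ref{techprop2}) bounds the Palm expectation of $N_{B_{\eps}}$ by the conditional expectation $\E[N_{B_{\eps}}\mid\o_R]$, and rigidity (Proposition \ref{regularity}) forces the latter to converge a.s.\ to $N_{B_{\eps}}=0$. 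Fatou's lemma then shows the conditional intensity has liminf $0$ at $\mu$-a.e.\ $x\in B_{\eps}$, i.e.\ $K(\cdot,x)\in\H_0$ for a.e.\ $x$, and a covering argument upgrades this to completeness. If you wish to salvage your route, you must extract a contradiction from the joint structure of the spaces $V$ as the interior points vary, not from the mere pointwise existence of witnesses; that is a substantially harder task than the one your sketch addresses.
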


All these questions are specific realisations of the following completeness question that was asked of any determinantal process  by Lyons and Peres. 

Consider a determinantal point process $\Pi$ in a space $\Xi$ equipped with a background measure $\mu$. Let the point process $\Pi$ correspond to a projection onto the subspace $\H$ of $L^2(\mu)$ in the usual way; for details, see Section \ref{defs} and also \cite{HKPV}, \cite{Sos} and \cite{Ly}. Let $K(\cdot,\cdot)$ be the kernel of the determinantal process, which is also the integral kernel corresponding to the projection onto $\H$. Consequently, $\H$ is a reproducing kernel Hilbert space, with the kernel $K(\cdot,\cdot)$.  Let $\{x_i\}_{i=1}^{\infty}$ be a sample from $\Pi$. Clearly, $\{K(\cdot,x_i)\}_{i=1}^{\infty} \subset \H$. Lyons and Peres asked  the  following question for any arbitrary determinantal point process associated with a projection kernel (see, e.g., \cite{Ly1} Section 4 and  Conjecture 4.6 therein):

\begin{question}
 \label{q1}
Is the random set of functions $\{K(\cdot,x_i)\}_{i=1}^{\infty}$ complete in $\H$ a.s.?
\end{question}
Results of a similar nature have been obtained in special cases. The answer to Question \ref{q1} is trivial in the case where $\H$ is finite dimensional (say the dimension is $N$), there it follows simply from the fact that the matrix $(K(x_i,x_j))_{i,j=1}^N$ is a.s. non-singular on one hand, and it is the Gram matrix of the vectors $\{K(\cdot,x_i)\} \subset \H$ on the other. In the case where $\Xi$ is a countable space, this was first proved, albeit implicitly, for the case of spanning forests by Morris \cite{Mo}, although his theorem was not stated in  terms of the completeness problem that we have been discussing. For a detailed explanation of the connection between Morris' theorem and the completeness problem, we refer the reader to \cite{Ly}, in particular Section 7 and the discussion  preceding Theorem 7.2 therein.  Subsequently,  this matter has been settled in the affirmative for any discrete determinantal process by Lyons, see \cite{Ly}. However, in the continuum (e.g. when $\Xi=\R^d$ and $\H$ is infinite dimensional), the answer to Question \ref{q1} is unknown.

In this paper, we answer Question \ref{q1} in the affirmative for rigid determinantal processes. Recall that a locally compact metric space is called \textit{proper} if every closed ball (of finite radius) in that space is compact.

\begin{theorem}
\label{rigcomp}
Let $\Pi$ be a determinantal point process with a kernel $K(\cdot,\cdot)$ on a second countable locally compact  metric space $E$ (equipped with a proper metric $d$) and a background measure $\mu$ which is a non-negative regular Borel measure. Suppose  
% that is diffuse, in the sense that for each  $x \in E$ and $r \ge 0$, we have $\mu\l(\{y:d(x,y)=r\}\r)=0$. Suppose that  
% with a background measure $\mu$ that is mutually absolutely continuous with respect to the Lebesgue measure on $\R^d$, 
 $K(\cdot,\cdot)$,  as an integral operator from $L^2(\mu)$ to itself, is the projection onto a closed subspace $\H \subset L^2(\mu)$
% , and the map $x \mapsto K(\cdot,x)$ is continuous on the support of $\mu$, as a map from $E$ to $\H$.

Let $\Pi$ be rigid, in the sense that for any open ball $B$ with a finite radius, the point configuration outside $B$ a.s. determines the number of points $N_B$ of $\Pi$ inside $B$. 
Then $\{K(\cdot,x): x \in \Pi \}$ is a.s. complete in $\H$, that is, a.s. this set of functions spans $\H$.
\end{theorem}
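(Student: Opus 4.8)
The plan is to translate completeness into a statement about vanishing functions, then localise to a bounded region using rigidity, thereby reducing the problem to the finite-dimensional Gram-matrix argument already described for finite-dimensional $\H$. Since $\H$ is a reproducing kernel Hilbert space with kernel $K$, for any $f\in\H$ and any $x\in E$ one has $f(x)=\langle f,K(\cdot,x)\rangle$. Hence $f$ is orthogonal to $\{K(\cdot,x):x\in\Pi\}$ precisely when $f$ vanishes at every point of $\Pi$. Writing $V^{\perp}=\{f\in\H:f|_{\Pi}=0\}$ for the orthogonal complement of the closed span of $\{K(\cdot,x):x\in\Pi\}$, completeness is equivalent to $V^{\perp}=\{0\}$, so it suffices to show that almost surely the only $f\in\H$ vanishing on all of $\Pi$ is $f\equiv0$.

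First I would fix a ball $B$ of finite radius (with $\mu(B)<\infty$) and condition on the exterior configuration $\Pi_{\mathrm{out}}:=\Pi\cap B^{c}$. By rigidity the number $N_B$ of points of $\Pi$ inside $B$ is a deterministic function of $\Pi_{\mathrm{out}}$; write $n=N_B$ and let $x_1,\dots,x_n$ be the conditionally random interior points. Introduce $W:=\{f\in\H:f(y)=0\ \text{for all }y\in\Pi_{\mathrm{out}}\}$, the orthogonal complement in $\H$ of $\overline{\mathrm{span}}\{K(\cdot,y):y\in\Pi_{\mathrm{out}}\}$. As a closed subspace of an RKHS, $W$ is itself a reproducing kernel Hilbert space with kernel $K_{W}(\cdot,x)=P_{W}K(\cdot,x)$, where $P_W$ is the orthogonal projection onto $W$. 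The two structural facts I would need are: (i) conditionally on $\Pi_{\mathrm{out}}$, the interior points $x_1,\dots,x_n$ form a determinantal process governed by the kernel $K_{W}$; and (ii) $\dim W=n=N_B$ almost surely.

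Granting (i) and (ii), the conclusion follows from the finite-dimensional argument. Conditionally on $\Pi_{\mathrm{out}}$ the joint law of $(x_1,\dots,x_n)$ has density proportional to $\det\bigl(K_{W}(x_i,x_j)\bigr)_{i,j=1}^{n}$, which is therefore almost surely nonzero; since this is the Gram determinant of $\{K_{W}(\cdot,x_i)\}_{i=1}^{n}\subset W$, these $n$ vectors are almost surely linearly independent and hence span the $n$-dimensional space $W$. Now let $f\in\H$ vanish on all of $\Pi$: vanishing on $\Pi_{\mathrm{out}}$ places $f\in W$, while vanishing at the interior points gives $f(x_i)=\langle f,K_{W}(\cdot,x_i)\rangle=0$ for each $i$, so $f$ is orthogonal to a spanning set of $W$ and thus $f=0$. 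Integrating out $\Pi_{\mathrm{out}}$ yields the claim unconditionally, and no measurable selection of $f$ is needed, since the assertion ``the only such $f$ is $0$'' already holds conditionally almost surely.

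The main obstacle is establishing (i) and especially (ii), and this is exactly where rigidity is indispensable. Assertion (ii) says that the exterior points span a subspace of $\H$ of finite codimension equal to the rigid count $N_B$. When $\H$ is finite dimensional this is automatic, since $\Pi$ then has a fixed total number of points, $W$ has dimension (total)$-$(number outside)$=N_B$, and rigidity holds trivially; but in the infinite-dimensional setting both $\overline{\mathrm{span}}\{K(\cdot,y):y\in\Pi_{\mathrm{out}}\}$ and $\H$ are infinite dimensional, and the finiteness of the codimension together with its precise value $N_B$ is a genuine consequence of rigidity rather than a formality. I would prove (ii) by combining the trace identity $\E[N_B\mid\Pi_{\mathrm{out}}]=\mathrm{Tr}(K_{W}|_{B})$ with the fact that rigidity forces the conditional count to be the constant $N_B$, which should pin the conditional kernel down to a rank-$N_B$ projection; making the conditional determinantal description in (i) rigorous requires the theory of conditional (Palm-type) measures of determinantal processes and care that the relevant operators are well defined. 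Restricting to balls with $\mu(B)<\infty$ and, if necessary, passing to the limit $R\to\infty$ along an exhausting sequence $B=B(0,R)$ would handle the remaining measure-theoretic points.
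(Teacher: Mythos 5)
Your opening reduction (completeness holds iff the only $f\in\H$ vanishing on all of $\Pi$ is $f\equiv 0$) is correct and matches the paper's first step. However, the heart of your argument rests entirely on the two claims you label (i) and (ii) --- that conditionally on the exterior configuration the interior points form a determinantal process with kernel $K_W=P_WK$, and that $\dim W=N_B$ almost surely --- and neither is proved. Your proposed route to (ii) is circular: the trace identity $\E[N_B\mid \Pi_{\mathrm{out}}]=\mathrm{Tr}(K_W|_B)$ already presupposes that the conditional process is determinantal with kernel governed by $K_W$, i.e.\ it presupposes (i); likewise the step ``a constant conditional count pins the conditional kernel down to a rank-$N_B$ projection'' assumes the conditional law is determinantal, which is again (i). And (i) itself is not a consequence of standard Palm theory: Palm measures condition on the \emph{presence of finitely many points}, whereas you need to condition on the entire (infinite) exterior configuration, and identifying that conditional measure as a determinantal/Gram-determinant ensemble with an explicit kernel is precisely the deep analytic content here --- it was established only in later work (Bufetov, Qiu and Shamov, resolving the full Lyons--Peres conjecture), and is substantially harder than the theorem you are asked to prove. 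As written, (i)--(ii) amount to a restatement (indeed a strengthening) of the theorem, not a proof of it.

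The paper's proof avoids identifying any conditional measure. Its key observation is that the Gram determinant ratio $\mathcal{D}(x,x_1,\dots,x_n)/\mathcal{D}(x_1,\dots,x_n)$ has two interpretations: it is the squared distance from $K(\cdot,x)$ to $\mathrm{Span}\{K(\cdot,x_i),1\le i\le n\}$, and it is also the Palm conditional intensity of $\Pi$ at $x$ given points at $x_1,\dots,x_n$ (a conditioning on finitely many points only, which \emph{is} standard Palm theory). Fixing a small ball $B_{\eps}$, rigidity plus Levy's 0-1 law (Proposition \ref{regularity}) show that on the event $\{B_{\eps}\cap\Pi=\emptyset\}$ the conditional expectation of $N_{B_{\eps}}$ given the configuration in a large annulus tends to $0$; a negative association inequality (Theorem \ref{cna}, via Propositions \ref{techprop1} and \ref{techprop2}) bounds the Palm expectation by this conditional expectation; Fatou's lemma then forces the Palm intensity --- hence the squared projection distance --- to have liminf zero for $\mu$-a.e.\ $x\in B_{\eps}$, so $K(\cdot,x)\in\H_0$ for a.e.\ such $x$, and a covering argument over countably many balls finishes. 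If you want to salvage your approach you must prove (i) and (ii) from scratch; with the tools available in this paper, that is not feasible.
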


A typical example of the setup described in Theorem \ref{rigcomp} is a rigid determinantal point process on a Euclidean space with a continuous kernel and a background measure that is mutually absolutely continuous with respect to Lebesgue measure. Many natural examples of determinanal point processes in the continuum, including the sine kernel process or the Ginibre ensemble, fit the above description.

To see the correspondence between Question \ref{q1} and Theorems \ref{sin} - \ref{gin}, we can make appropriate substitutions for the kernels and spaces in Theorem \ref{rigcomp}, for details see Section \ref{defs}. Briefly, the statement of Theorem \ref{sin} is equivalent to Question \ref{q1} (with an affirmative answer) under Fourier conjugation. The statement in Theorem \ref{gin} involving the vanishing of functions on $\L$ is a result of the fact that the Fock-Bargmann space is a reproducing kernel Hilbert space, with the reproducing kernel and background measure being the same as the determinantal kernel and the background measure of the Ginibre ensemble.

Completeness (in the appropriate Hilbert space) of collections of exponential functions indexed by a point configuration is a well-studied theme, for a classic reference see the survey by Redheffer \cite{Re}. However, most of the classical results deal with deterministic point configurations, and are often stated in terms of some sort of density of the underlying point set.  E.g., one crucial parameter is the Beurling Malliavin density of the point configuration, for details, see \cite{Ly} Definition 7.13 and the ensuing discussion there. Typically, the results are of the following form : if the relevant density parameter is supercritical, then the exponential system is complete, and if it is subcritical, it is incomplete. E.g., see Beurling and Malliavin's theorem, stated as Theorem 71 in \cite{Re}. 

However, it turns out that in our cases of interest, e.g. as in Theorems \ref{sin} and \ref{gin}, the density of the point process is almost surely equal to the critical density in terms of the classical results (see, e.g., Section 4.2 points 1 and 2 in \cite{Ly1}).  In this paper we have chosen the normalizations for our models such that the one-point intensity (equivalently, the critical density)  is equal to 1. 

The critical cases in the deterministic setting are more difficult to handle. E.g., in $L^2[-\pi,\pi]$, $\{e_{\la}:\la \in \Z\}$ is a complete set of exponentials, but $\{e_{\la}:\la \in \Z \setminus \{0\}\}$ is incomplete.  In the random case, when the densities are super or subcritical,  we can either invoke the results in the deterministic setting (e.g., Theorem 71 in \cite{Re}), or there is existing literature (see \cite{CLP} Theorems 1.1 and 1.2 or \cite{SU}). However, at critical densities, which is the case we are interested in, much less is known. To the best of our knowledge, the only known case is that of a perturbed lattice, where completeness was established under some regularity conditions on the (random) perturbations, see \cite{CL} Theorem 5. Our result in Theorem \ref{rigcomp} answers this question for natural point processes, like the Ginibre or the sine kernel, which are not independent perturbations of  a lattice.

There are other natural examples of determinantal point processes for which similar questions are not amenable to our approach. E.g., one can consider Question \ref{q1} for the zero process of the hyperbolic Gaussian analytic function, where the answer, either way, is unknown, because this determinantal point process is not rigid (see \cite{HoSo}).

En route proving Theorem \ref{rigcomp}, we establish a version of the negative association property for determinantal point processes in the continuum, which is relevant for our purposes.  In the discrete setting, a complete theorem to this effect was has been proved in \cite{Ly}. However,  we could not locate such a result in the literature on determinantal point processes in the continuum. In Theorem \ref{cna}, we establish negative association for the number of points for determinantal point processes in a general setting.
\begin{definition}
\label{negcordef}
 We call two non-negative real valued random variables $X$ and $Y$ negatively associated if for any real numbers $r$ and $s$ we have \[ \P \big(   \l( X > r \r) \cap \l( Y > s \r)  \big)  \le \P \big( X > r \big) P \big( Y > s \big). \] 
This is equivalent to the complementary condition 
\[  \P \big(   \l( X \le r \r) \cap \l( Y \le s \r)  \big)  \le \P \big( X \le r \big)\P \big( Y \le s \big).  \]
\end{definition}
% In the theory of determinantal point processes on $\R^d$, by a standard kernel we mean a Hermitian kernel $\k(x,y)$ which is continuous as a function from $\R^d \times \R^d \to \C$ and is a non-negative trace class contraction when viewed as an integral operator from $L^2(\mu)\to L^2(\mu)$, where $\mu$ is the background measure. For further details, we refer the interested reader to \cite{HKPV}. 

\begin{theorem}
\label{cna} 
% For any determinantal point process with a standard kernel on $\R^d$ and the background measure $\mu$ absolutely continuous with respect to the Lebesgue measure, the numbers of points in two disjoint Borel sets are negatively associated random variables, in the sense of Definition \ref{negcordef}. 
 Let $E$ be a second countable locally compact Hausdorff space, equipped with a non-negative regular Borel measure $\mu$. 
%  Let $\mu$ be also diffuse, in the sense that for every $x\in E$ and $r\ge 0$, we have $\mu\l(\{y:d(x,y)=r\}\r)=0$. 
 Let $\Pi$ be a determinantal point process on $E$ with a kernel $K$ and background measure $\mu$. Then, for every finite collection of disjoint  Borel sets $\{A_i\}_{i=1}^n \subset E$ and integers $\{m_i\}_{i=1}^n$, we have 
 \begin{equation} \label{neg0}\P\l(\bigcap_{i=1}^n \{N(A_i) \le m_i\}\r) \le \prod_{i=1}^n \P\l(N(A_i)\le m_i\r).\end{equation} 
 In particular, the random variables $N(A)$ and $N(B)$ are negatively associated for any two disjoint Borel sets $A$ and $B$.
\end{theorem}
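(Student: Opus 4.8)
The plan is to deduce the continuum inequality \eqref{neg0} from its discrete counterpart, which is already available, by approximating $\Pi$ by a sequence of discrete determinantal processes and passing the inequality through a weak limit. First I would record the discrete input: for a determinantal probability measure on a finite or countable set governed by a Hermitian contraction kernel, the occupation variables $\{\xi_j\}$ are negatively associated, which is proved in the discrete setting by Lyons (\cite{Ly}). Since, for disjoint blocks $S_1,\dots,S_n$ of sites, the block counts $\sum_{j\in S_i}\xi_j$ are non-decreasing functions of disjoint sets of the $\xi_j$, negative association is inherited by these counts, and the negative-orthant form $\P(\bigcap_i\{\,\cdot\le m_i\})\le\prod_i\P(\cdot\le m_i)$ follows from the standard closure properties of negatively associated families. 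Thus \eqref{neg0} holds for every discrete determinantal process, and the task reduces to a limiting argument.

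Next I would build the discretization. By inner and outer regularity of $\mu$, and by monotonicity of the events (replacing $A_i$ by $A_i\cap B_R$ and letting $R\to\infty$, since $N(A_i\cap B_R)\uparrow N(A_i)$), I may assume the $A_i$ are relatively compact and contained in a fixed ball $B_R$; the restriction $\mathbf 1_{B_R}K\mathbf 1_{B_R}$ is then trace class because $\Pi$ is a locally finite determinantal process, which is exactly what makes all subsequent Fredholm determinants well defined. For a mesh tending to $0$, partition $B_R$ into Borel cells $\{C_j\}$ of positive $\mu$-measure, let $V_k\subset L^2(\mu)$ be the subspace of functions constant on each cell, and let $Q_k$ be the orthogonal projection onto $V_k$. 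The compression $K_k:=Q_kKQ_k$ is again a Hermitian contraction, so in the orthonormal basis $\{\mathbf 1_{C_j}/\sqrt{\mu(C_j)}\}$ it is the kernel matrix of a genuine discrete determinantal process $\Pi_k$ on the set of cells. Using regularity I approximate each $A_i$ by a union $A_i^{(k)}$ of cells with $\mu(A_i^{(k)}\,\triangle\,A_i)\to0$; the discrete block counts of $\Pi_k$ over the $A_i^{(k)}$ are the objects to which the discrete inequality applies.

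The heart of the argument is to show that these discrete counts converge in distribution to $(N(A_1),\dots,N(A_n))$, which I would do at the level of joint probability generating functions. For disjoint sets the generating function of $\Pi$ is the Fredholm determinant $F(z)=\det\big(I-\sum_i(1-z_i)\mathbf 1_{A_i}K\big)$ (see \cite{HKPV}, \cite{Sos}), and the analogous determinant with $K$ replaced by $K_k$ and $\mathbf 1_{A_i}$ by $\mathbf 1_{A_i^{(k)}}$ is exactly the generating function of the discrete block counts. Since $Q_k\to I$ strongly and $\mathbf 1_{B_R}K\mathbf 1_{B_R}$ is trace class, one has $Q_kKQ_k\to K$ in trace norm, while $\mathbf 1_{A_i^{(k)}}\to\mathbf 1_{A_i}$ strongly as multipliers; combining these gives trace-norm convergence of the operators inside the determinants, whence $F_k(z)\to F(z)$ for $z\in[0,1]^n$ by continuity of the Fredholm determinant. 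Convergence of the generating functions forces convergence of the integer-valued laws, so $\P\big(\bigcap_i\{N_k(A_i^{(k)})\le m_i\}\big)\to\P\big(\bigcap_i\{N(A_i)\le m_i\}\big)$, and likewise for the marginals. Passing the discrete inequality \eqref{neg0} to the limit then yields the theorem.

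I expect the main obstacle to be the trace-norm (rather than merely strong) convergence underlying $F_k\to F$, together with the bookkeeping needed to align arbitrary disjoint Borel sets $A_i$ with the cell partition. Both hinge on the local trace-class property of $K$ and the regularity of $\mu$: trace-class-ness upgrades the strong convergences $Q_k\to I$ and $\mathbf 1_{A_i^{(k)}}\to\mathbf 1_{A_i}$ to trace-norm convergence of the relevant products, and regularity guarantees that the cell approximations $A_i^{(k)}$ can be taken with vanishing symmetric-difference measure, so that the expected number of points in $A_i^{(k)}\,\triangle\,A_i$ tends to $0$ and no mass is lost in the limit.
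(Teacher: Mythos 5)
Your proposal is correct in its essentials, but it takes a genuinely different route from the paper's proof. Both arguments share the same skeleton: reduce to precompact $A_i$ via the monotone limit $N(A_i\cap B_R)\uparrow N(A_i)$, and then reduce the continuum inequality to Lyons' discrete negative association theorem (Theorem 6.5 in \cite{Ly}). The difference lies in how the passage to the discrete case is effected. The paper compresses $K$ to $A=\cup_{i}A_i$, diagonalizes the (trace-class) compression, and invokes Goldman's transference principle (Theorem \ref{transfer1} and Proposition \ref{transfer2}, quoted from \cite{Ly1}): this produces a discrete determinantal process on a countable set whose block counts have \emph{exactly} the same joint law as $(N(A_1),\dots,N(A_n))$, so the discrete inequality applies verbatim and no limiting argument is needed. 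You instead build an approximating sequence of discrete determinantal processes by compressing $K$ with conditional-expectation projections $Q_k$ onto refining cell partitions, and pass the inequality through a weak limit via convergence of Fredholm determinants. Your route is self-contained modulo standard operator theory and martingale convergence --- in effect you re-prove a weak form of the transference principle (equality in law replaced by convergence in law) --- at the price of the extra bookkeeping you correctly flag: the partitions must generate the Borel $\sigma$-algebra of $B_R$ modulo $\mu$-null sets so that $Q_k\to I$ strongly; the approximations $A_i^{(k)}$ must be unions of cells that are \emph{disjoint} across $i$ (a majority rule, assigning a cell to $A_i^{(k)}$ exactly when more than half its $\mu$-mass lies in $A_i$, achieves this automatically because the $A_i$ are disjoint); and one needs the multivariate continuity theorem for probability generating functions to convert determinant convergence into convergence of the orthant probabilities (harmless, since the counts are integer-valued). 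One sentence of yours must be read with care: ``$Q_kKQ_k\to K$ in trace norm'' is false for the full kernel $K$ (an infinite-rank projection is not trace class); what is true, and what your argument actually uses since $Q_k=\mathbf{1}_{B_R}Q_k=Q_k\mathbf{1}_{B_R}$ and all the sets lie in $B_R$, is $Q_k(\mathbf{1}_{B_R}K\mathbf{1}_{B_R})Q_k\to\mathbf{1}_{B_R}K\mathbf{1}_{B_R}$ in trace norm; the determinant identity $\det(I+AB)=\det(I+BA)$ then lets you rewrite all generating functions in terms of this compression.
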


Consequent to Theorem \ref{cna}, we also discuss briefly how the above notion of negative dependence implies more general notions of negative association for determinantal point processes.

In addition to the completeness questions for random exponentials defined with respect to natural determinantal processes, we also partially answer two questions asked by Lyons and Steif in \cite{LySt}. First, we give a little background on a certain class of stationary determinantal processes on $\Z^d$ studied in \cite{LySt}.

Let $f$ be a function $\td \to [0,1]$, where $\td=[0,1]^d$. Then multiplication by $f$ is a non-negative contraction operator from $L^2(\td)$ to itself. Under Fourier conjugation, this gives rise to a non-negative contraction operator   $Q: \ell^2(\Z^d) \to \ell^2(\Z^d)$. This, in turn, gives rise to a determinantal point process $\pf$ on $\Z^d$ in a canonical way, for details see \cite{Ly}. For a point configuration $\o$ on $\Z^d$, we denote by $\o(\underline{k})$ the indicator function of having a point at $\underline{k}\in \Z^d$ in the configuration $\o$. We denote by $\ou$ the configuration  of points on $\Z^d \setminus \mathbf{0}$ obtained by restricting $\o$ to $\Z^d \setminus \mathbf{0}$, where $\mathbf{0}$ denotes the origin in $\Z^d$.  

For a point process $\Pi$ on a space $\Xi$, we denote by $[\Pi]$ the (random) counting measure obtained from a realisation of $\Pi$. The $k$-point intensity functions of a point process, when they exist, will be denoted by $\rho_k, k \ge 1$. For the processes $\pf$, all intensity 
functions exist. Moreover, the translation invariance of $\pf$ implies that $\rho_k(x_1+x,\cdots,x_k+x)=\rho_k(x_1,\cdots,x_k)$ for all $x,x_1,\cdots,x_k \in \Z^d$, in particular, $\rho_1$ is a constant  $ \in [0,1]$.

In the paper \cite{LySt} it was conjectured (Conjecture 9.9 therein) that all determinantal processes obtained in this way are insertion and deletion tolerant, meaning that both $\P[\o(\mathbf{0})=1 | \ou ]>0$  and $\P[\o(\mathbf{0})=0 | \ou ]>0$ a.s. We resolve this conjecture in the negative, showing that for $f$ which is the indicator function of an interval in $\t$, this is not true. 

\begin{theorem}
\label{indic}
Let $f$ be the indicator function of an interval $I \subset \t$. Then there exists a measurable function \[N: \mbox {Point configurations on }\Z \setminus \mathbf{0} \to \N \cup \{0\} \] such that a.s. we have $\o(\mathbf{0})=N(\ou)$.  Consequently, the events  $\{\P[\o(\mathbf{0})=1 | \ou ]=0\}$ and  $\{\P[\o(\mathbf{0})=0 | \ou ]=0\}$ both have positive probability (in $\ou$).  
\end{theorem}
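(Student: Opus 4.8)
The plan is to prove the stronger assertion that the occupation variable $\o(\mathbf 0)\in\{0,1\}$ is measurable with respect to $\sigma(\ou)$; the function $N$ and the two events then follow at once. I will establish this single-site rigidity by the variance (linear-statistics) method. Concretely, I aim to produce finitely supported $g_n\colon\Z\to\C$ with $g_n(0)=1$ and $\var(\L_n)\to 0$, where $\L_n:=\sum_{k\in\Z}g_n(k)\,\o(k)$. Granting this, decompose $\L_n=\o(\mathbf 0)+R_n$ with $R_n:=\sum_{k\ne 0}g_n(k)\,\o(k)$, a bounded $\ou$-measurable random variable. Since $\|\L_n-\E\L_n\|_{L^2}^2=\var(\L_n)\to 0$, we obtain $\o(\mathbf 0)=\lim_n\big(\E\L_n-R_n\big)$ in $L^2$, an $L^2$-limit of $\ou$-measurable functions. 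Because the space of $\ou$-measurable $L^2$ functions is a closed subspace, $\o(\mathbf 0)$ equals a.s.\ some $\ou$-measurable $N(\ou)$, necessarily $\{0,1\}$-valued and hence a map into $\N\cup\{0\}$.

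The construction of $g_n$ is where the indicator structure of $f$ is essential. From the determinantal relations $\E[\o(k)]=K(k,k)$ and $\mathrm{Cov}(\o(j),\o(k))=-|K(j,k)|^2$ for $j\ne k$, together with $K(j,k)=\hf(j-k)$, a direct computation (expanding $|1-e^{2\pi i m\xi}|^2=4\sin^2(\pi m\xi)$ and applying Parseval) yields, for $\hat g(\xi):=\sum_k g(k)e^{2\pi i k\xi}\in L^2(\t)$, the clean identity
\begin{equation*}
\var\Big(\sum_k g(k)\,\o(k)\Big)=2\int_{\t}W(\xi)\,|\hat g(\xi)|^2\,d\xi,\qquad W(\xi):=\sum_{m\ne 0}|\hf(m)|^2\,\sin^2(\pi m\xi).
\end{equation*}
For an interval $I$ of length $\alpha\in(0,1)$ one has $|\hf(m)|^2=\sin^2(\pi m\alpha)/(\pi^2m^2)$, and summing the resulting series using the Fourier expansion $\sum_{m\ge 1}\cos(2\pi mt)/m^2=\pi^2 B_2(\{t\})$ of the Bernoulli polynomial $B_2$ gives the striking identity $W(\xi)=|\xi|/2$ for all $\xi$ in a neighbourhood of $0$, independently of $\alpha$. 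Hence $1/W$ has a (barely) non-integrable $1/|\xi|$ singularity at the origin, so that $\int_{\t}d\xi/W(\xi)=+\infty$.

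Given the divergence of $\int_\t d\xi/W$, minimising $\int_\t W|\hat g|^2$ subject to $g(0)=\int_\t\hat g\,d\xi=1$ is a Cauchy--Schwarz extremal problem whose infimum is $\big(\int_\t d\xi/W\big)^{-1}=0$. Explicitly, set $\hat g_n:=Z_n^{-1}(W+\eps_n)^{-1}$ with $\eps_n\downarrow 0$ and $Z_n:=\int_\t d\xi/(W+\eps_n)$; then $\int_\t\hat g_n=1$, i.e.\ $g_n(0)=1$, while $\int_\t W|\hat g_n|^2\le Z_n^{-1}\to 0$ since $Z_n\to\infty$. A Weierstrass approximation of $\hat g_n$ by a trigonometric polynomial (renormalised to keep its constant coefficient equal to $1$) makes $g_n$ finitely supported without spoiling $g_n(0)=1$ or $\var(\L_n)\to 0$, furnishing the sequence required in the first paragraph. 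Finally, the dichotomy is immediate: since $\rho_1=\hf(0)=|I|=\alpha\in(0,1)$ for a proper subinterval, both $\{N(\ou)=0\}$ and $\{N(\ou)=1\}$ carry positive probability, and these coincide with $\{\P[\o(\mathbf 0)=1\mid\ou]=0\}$ and $\{\P[\o(\mathbf 0)=0\mid\ou]=0\}$ respectively.

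The main obstacle, and the one genuinely model-specific step, is the identification $W(\xi)=|\xi|/2$ near $0$ and the resulting logarithmic --- hence \emph{critical} --- divergence of $\int_\t d\xi/W$. This is exactly the feature destroyed when $f$ is smooth: then $\hf(m)$ decays faster than any power, $W$ vanishes to higher order at $\xi=0$, $\int_\t d\xi/W<\infty$, the infimum of the variance is strictly positive, and one expects insertion and deletion tolerance, consistent with the Lyons--Steif conjecture for regular symbols. The remaining steps --- the variance identity and the $L^2$-closedness deduction --- are robust and not specific to the interval.
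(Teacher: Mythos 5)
Your proposal is correct, and it shares the paper's high-level strategy (drive to zero the variance of a linear statistic whose weight equals $1$ at the origin, then recover $\o(\mathbf{0})$ from $\ou$-measurable quantities), but the key construction is genuinely different. The paper works in physical space: using only the bound $|\hf(k)|\le c/|k|$, it dominates the variance by the $H^{1/2}$-type form $\L(\ph,\ph)$ of (\ref{form}), kills it with the multiscale average $\Phi^n=\frac{1}{n}\sum_{i=1}^n \ph_{\la^{-i}}$ of dilated bumps, whose cross terms are almost orthogonal, $\L(\ph_{\la^{-i}},\ph_{\la^{-j}})\le 2^{-|i-j|}$ (via Proposition \ref{decay}), and concludes by Borel--Cantelli along a summable sequence. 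You work on the Fourier side: you diagonalize the variance form as $2\int_{\t}W|\hat g|^2\,d\xi$, compute the weight exactly near the origin ($W(\xi)=|\xi|/2$; your Bernoulli-polynomial computation checks out), observe the borderline divergence $\int_{\t}d\xi/W=\infty$, solve the resulting Cauchy--Schwarz extremal problem with $\hat g_n\propto (W+\eps_n)^{-1}$ (Fej\'er approximation keeps $g_n$ finitely supported and, conveniently, preserves $g_n(0)=1$ exactly), and finish by $L^2$-closedness of the subspace of $\sigma(\ou)$-measurable functions instead of Borel--Cantelli. Your route is more quantitative -- the infimum of the variance over admissible weights is exhibited as a reciprocal capacity, and rigidity is read off as exactly the non-integrability of $1/W$ -- and the $L^2$ argument avoids subsequence bookkeeping. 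The paper's route transfers verbatim to finite unions of intervals and to the continuum (Theorem \ref{contrig}) because it never uses more than $|\hf(k)|\le c/|k|$; but note your method is equally robust in this respect, since that same bound already gives $W(\xi)\le C|\xi|$ near $0$, hence $\int_{\t}d\xi/W=\infty$, so the exact identity $W(\xi)=|\xi|/2$, while pretty, is not actually needed.

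One misstatement in your closing discussion should be fixed: the ``clean'' variance identity is itself specific to indicator $f$. Its derivation requires the projection identity $\hf(0)-\hf(0)^2=\sum_{m\ne 0}|\hf(m)|^2$, equivalently $\int_{\t}f=\int_{\t}f^2$, which holds precisely when $f$ is an indicator (this is also the hypothesis under which the paper's Proposition \ref{varlst} applies). For general $f$ the variance carries the additional nonnegative term $\bigl(\int_{\t}f-\int_{\t}f^2\bigr)\sum_k|g(k)|^2$, and this term is exactly what forces the variance of any such statistic to blow up (Proposition \ref{explo} in the paper). This does not affect your proof, which concerns indicators, but your remark that the variance identity is ``robust and not specific to the interval'' is false as stated; what is robust is the off-diagonal part of the computation, not the identity itself.
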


We end by partially answering another question from \cite{LySt}, where we demonstrate that ``almost all'' functions $f$ can be reconstructed from the distribution $\pf$.
\begin{theorem}
\label{deter}
Define $\e$ to be the set of functions  \[\e:=\{f \in L^{\infty}(\t): 0 \le f(x) \le 1 \text{ for almost every } x \in \t \}.\]
Then $\pf$ determines $f$ up to translation and flip, except possibly for a meagre set of functions in the $L^{\infty}$ topology on $\e$.
\end{theorem}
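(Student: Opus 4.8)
The plan is to reconstruct $f$ from the correlation functions of $\pf$, exploiting that these are the principal minors of the Hermitian Toeplitz kernel. In the Fourier basis the operator $Q$ is the Toeplitz matrix with entries $Q_{jk}=\hf(j-k)$, so the $k$-point intensity is $\rho_k(x_1,\dots,x_k)=\det\big(\hf(x_i-x_j)\big)_{i,j=1}^k$, and all of these are determined by $\pf$. Writing $\hf(n)=r_n e^{i\theta_n}$ with $r_n=|\hf(n)|$ (and using that $f$ real forces $\hf(-n)=\overline{\hf(n)}$ and $\theta_0=0$), I first note that translation $f\mapsto f(\cdot-t)$ and flip $f\mapsto f(-\cdot)$ act on the kernel by $Q\mapsto DQD^{-1}$ for a diagonal unitary $D$ (via $\theta_n\mapsto\theta_n-2\pi n t$) and by complex conjugation ($\theta_n\mapsto-\theta_n$), respectively; both leave every principal minor unchanged. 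Thus these operations are genuine symmetries of $\pf$, and the content of the theorem is that, off a meagre set, they are the only ones.

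First I would extract the elementary invariants. From $\rho_1=\hf(0)$ and $\rho_2(0,n)=\hf(0)^2-|\hf(n)|^2$ we recover all magnitudes $r_n$. Evaluating the $3$-point intensity at the configuration $\{0,u,u+v\}$ and subtracting the now-known lower-order terms yields $2\,\Re\!\big(\hf(u)\hf(v)\overline{\hf(u+v)}\big)$, the real part of the bispectrum; equivalently, wherever $r_u r_v r_{u+v}\neq0$ we recover $\cos\psi(u,v)$ for the bispectral phase $\psi(u,v):=\theta_u+\theta_v-\theta_{u+v}$. This fixes each $\psi(u,v)$ only up to sign. To resolve the signs I would pass to $\rho_4$: expanding the $4\times4$ Toeplitz minor by cycle type, every contribution except the two $4$-cycles combines into products of already-known magnitudes and (real parts of) triple products, so $\rho_4$ yields $\Re$ of a four-fold product and hence $\cos\big(\psi(p,q)+\psi(p+q,r)\big)$. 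Combining this with $\cos\psi(p,q)$ and $\cos\psi(p+q,r)$ through the identity $\sin A\sin B=\cos A\cos B-\cos(A+B)$ determines the product $\sin\psi(p,q)\,\sin\psi(p+q,r)$, i.e.\ the relative sign of adjacent bispectral phases.

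With these invariants in hand the reconstruction is a phase-retrieval recursion. From $\theta_{n+1}=\theta_n+\theta_1-\psi(n,1)$ with $\theta_0=0$ one gets $\theta_n=n\theta_1-\sum_{k=1}^{n-1}\psi(k,1)$, so the phases are pinned down once $\{\psi(n,1)\}_{n\ge1}$ is known, the free parameter $\theta_1$ corresponding exactly to the translation ambiguity $\theta_n\mapsto\theta_n+cn$. The relative signs furnished by $\rho_4$ tie all the $\psi(n,1)$ into a single chain, leaving precisely one global sign ambiguity $\psi(\cdot,\cdot)\mapsto-\psi(\cdot,\cdot)$, which is exactly the flip $\theta_n\mapsto-\theta_n$. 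Hence $\{\hf(n)\}$, and therefore $f$, is determined up to translation and flip.

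It remains to confine the failures to a meagre set, and this is where the main obstacle lies. The recursion and the sign propagation require (G1) $\hf(n)\neq0$ for every $n$, so that the phases are defined and $\rho_2,\rho_3$ expose them, and (G2) $\sin\psi(n,1)\neq0$ for every $n\ge1$ (equivalently $\hf(n)\hf(1)\overline{\hf(n+1)}\notin\R$), so that the relative-sign relations are non-degenerate and the chain does not break. Each set $\{f:\hf(n)=0\}$ and $\{f:\Im(\hf(n)\hf(1)\overline{\hf(n+1)})=0\}$ is a closed, nowhere-dense subset of $\e$ in the $L^\infty$ topology: closed because the relevant Fourier functionals are $L^\infty$-continuous, and nowhere-dense because any $f$ in the interior of $\e$ (e.g.\ bounded away from $0$ and $1$) can be perturbed by a small trigonometric polynomial to violate the equation. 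Their countable union is a meagre exceptional set off which (G1)--(G2) hold and the reconstruction succeeds; and since any $g$ with $\mathbb{P}^g=\pf$ automatically shares these invariants with $f$, such a $g$ also satisfies (G1)--(G2) and must agree with $f$ up to translation and flip. The points demanding careful verification are the clean cycle-type isolation of the four-fold product in $\rho_4$ and the genericity argument ensuring that (G2) fails only on a nowhere-dense set.
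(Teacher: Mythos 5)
Your strategy runs parallel to the paper's up through the use of $\rho_3$: the magnitudes $|\hat{f}(n)|$ from $\rho_1,\rho_2$, and the bispectral cosines $\cos\psi(u,v)$ from $\Re\big(\hat{f}(u)\hat{f}(v)\overline{\hat{f}(u+v)}\big)$ extracted from $\rho_3$, are both correct and agree with the paper's steps. The gap is in the $\rho_4$ step, exactly at the point you flag as needing verification, and the verification fails. In $S_4$ there are six $4$-cycles, forming three inverse pairs, not ``two $4$-cycles'': evaluating $\rho_4$ at $\{0,p,p+q,p+q+r\}$ and subtracting everything already known from $\rho_1,\rho_2,\rho_3$ (the identity, transposition, double-transposition and $3$-cycle terms), what remains is a single known linear combination of the \emph{three} products
\[
\sin\psi(p,q)\sin\psi(p+q,r),\qquad \sin\psi(q,r)\sin\psi(p,q),\qquad \sin\psi(q,r)\sin\psi(p+q,r),
\]
arising from the three pairs of $4$-cycles, whose phases are $\psi(p,q)+\psi(p+q,r)$, $\psi(q,r)-\psi(p,q)$ and $\psi(q,r)-\psi(p+q,r)$ respectively. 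So a single $\rho_4$ evaluation does not isolate $\cos\big(\psi(p,q)+\psi(p+q,r)\big)$, your product-to-sum identity has nothing to act on, and the sign chain breaks at its first link. The argument is plausibly repairable: the one equation constrains the two independent sign products, and its four candidate values are generically distinct, so generically it determines all relative signs at once. But that repair requires additional non-degeneracy hypotheses (distinctness of those four values, i.e.\ inequalities among products of the $|\hat{f}(\cdot)|$ and $|\sin\psi(\cdot,\cdot)|$), which must in turn be shown to fail only on closed nowhere-dense sets; none of this is in the proposal.

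For comparison, the paper avoids $\rho_4$ entirely, and you could too. After spending the translation symmetry to make $a_1=\hat{f}(1)>0$ and the flip symmetry to make $\Im(a_2)>0$, it evaluates \emph{two} three-point intensities, $\rho_3(0,1,n)$ and $\rho_3(0,2,n)$, which yield $\Re(a_n\overline{a_{n-1}})$ and $\Re(a_n\overline{a_2a_{n-2}})$: the projections of the unknown coefficient $a_n$ onto two reference directions already determined by induction. Together with $|a_n|$, two such projections pin down $a_n$ exactly unless the two reference arguments differ by a multiple of $\pi$, and that non-parallelism is precisely the genericity condition (ii) defining the paper's co-meagre class $\F$. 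Substituting this two-$\rho_3$ induction for your $\rho_4$ sign chain closes the gap, with genericity conditions of the same character as your (G1)--(G2).
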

For any (as opposed to ``almost all'') function $f$, we prove that $\pf$ determines the value distribution of $f$.

\begin{proposition}
 \label{valdist}
For any $f \in \e$, $\pf$ determines the value distribution of $f$. This is true for $\pf$ defined on $\Z^d$ for any $d \ge 1$. 
\end{proposition}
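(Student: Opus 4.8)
The plan is to recover the value distribution of $f$ --- that is, the law $\nu$ of $f(U)$ for $U$ uniform on $\td$, equivalently the pushforward $\nu = f_*(\mathrm{Leb}_{\td})$, a probability measure on $[0,1]$ --- directly from the spectral data of $Q$ as read off from the law of $\pf$. Recall that under Fourier conjugation $Q$ is the convolution operator on $\ell^2(\Z^d)$ with matrix $Q(\underline j,\underline k)=\hf(\underline j-\underline k)$, where $\hf$ denotes the Fourier coefficients of $f$; in other words $Q$ is the (multilevel) Toeplitz operator with bounded symbol $f$. For a finite box $\Lambda \subset \Z^d$, let $Q_\Lambda=(\hf(\underline j-\underline k))_{\underline j,\underline k\in\Lambda}$ be the compression of $Q$ to $\Lambda$, with eigenvalues $\mu_1(\Lambda),\dots,\mu_{|\Lambda|}(\Lambda)\in[0,1]$. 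The structural fact I would rely on is that $N(\Lambda)$, the number of points of $\pf$ in $\Lambda$, has the law of $\sum_j \mathrm{Ber}(\mu_j(\Lambda))$ with independent summands.

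First I would observe that the law of $\pf$ determines, for every finite box $\Lambda$, the full eigenvalue multiset $\{\mu_j(\Lambda)\}$ of $Q_\Lambda$. Indeed, $N(\Lambda)$ is a measurable functional of the configuration, so its distribution is determined by the law of $\pf$; and from $\E\binom{N(\Lambda)}{k}=e_k(\mu_1(\Lambda),\dots,\mu_{|\Lambda|}(\Lambda))$, the $k$-th elementary symmetric polynomial of the eigenvalues, the quantities $e_0,\dots,e_{|\Lambda|}$ are recovered, hence so is the multiset. Equivalently, the empirical eigenvalue measure $\nu_\Lambda:=\frac{1}{|\Lambda|}\sum_{j}\delta_{\mu_j(\Lambda)}$ on $[0,1]$ is a function of the law of $\pf$ alone.

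Next I would identify the weak limit of $\nu_\Lambda$, as $\Lambda\uparrow\Z^d$ along boxes, with the value distribution $\nu$. Since $Q_\Lambda$ is precisely the truncated Toeplitz matrix with symbol $f$, the (multidimensional) first Szeg\H{o} limit theorem gives, for every $\phi$ continuous on $[0,1]$,
\[ \int \phi\, d\nu_\Lambda = \frac{1}{|\Lambda|}\,\mathrm{tr}\,\phi(Q_\Lambda)\ \longrightarrow\ \int_{\td}\phi(f(x))\,dx=\int\phi\,d\nu, \]
so that $\nu_\Lambda\to\nu$ weakly. Because every $\nu_\Lambda$ is determined by the law of $\pf$, so is the limit $\nu$, which proves that $\pf$ determines the value distribution of $f$; the argument is insensitive to the dimension, giving the claim for every $d\ge1$.

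The main obstacle is the convergence step: establishing that the empirical spectral measures of the truncated Toeplitz matrices $Q_\Lambda$ converge to $f_*(\mathrm{Leb}_{\td})$ for a merely bounded symbol. For polynomial test functions this amounts to $\frac1{|\Lambda|}\mathrm{tr}(Q_\Lambda^m)\to\int_{\td} f^m$, i.e. to showing that the closed-walk sum $\sum_{\underline k_1,\dots,\underline k_m\in\Lambda}\hf(\underline k_1-\underline k_2)\cdots\hf(\underline k_m-\underline k_1)$ equals $|\Lambda|\int f^m$ up to a boundary error of order $o(|\Lambda|)$. I would handle this either by invoking the multidimensional Szeg\H{o} theorem directly, or, if an elementary route is preferred, by first approximating $f$ by trigonometric polynomials (for which $\hf$ is finitely supported, making $Q_\Lambda$ banded and the boundary error manifestly $O(|\partial\Lambda|)=o(|\Lambda|)$) and then controlling the approximation error using $\hf\in\ell^2$ together with the contraction bound $0\le Q_\Lambda\le I$.
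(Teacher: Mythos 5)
Your proof is correct, but it follows a genuinely different route from the paper's. The paper never looks at spectra of compressions: it recovers the moments $\int_{\td} f^k\,d\la_d$ by a probabilistic detour, namely thinning $\pf$ by independent site percolation to obtain the law $\P^{tf}$ for each $t\in(0,1)$, passing to the complementary (excluded-point) process to obtain $\P^{1-tf}$, and then invoking the Lyons--Steif theory of uniform insertion tolerance (their Definition 5.15 and Theorem 5.16) to read off the harmonic mean via $\hm(1-tf)=\mathrm{Sup}\{p\in[0,1]:\mu_p \preccurlyeq_{\text{f}} \P^{1-tf}\}$; expanding $\hm(1-tf)^{-1}=\int_{\td}(1-tf)^{-1}\,d\la_d$ in powers of $t$ yields all moments of $f$, and moment determinacy on $[0,1]$ finishes. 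You instead extract, from the law of the count $N(\Lambda)$ in each finite box, the elementary symmetric functions $\E\binom{N(\Lambda)}{k}=e_k\bigl(\mu_1(\Lambda),\dots,\mu_{|\Lambda|}(\Lambda)\bigr)$, hence the full eigenvalue multiset of the Toeplitz compression $Q_\Lambda$, and identify the weak limit of the empirical spectral measures with $f_*(\mathrm{Leb})$ by the first Szego limit theorem for multilevel Toeplitz matrices. Your acknowledged ``main obstacle'' is indeed closable exactly as you outline: taking Fej\'er means gives trigonometric polynomials $p$ with $0\le p\le 1$, the banded case has only an $O(|\partial\Lambda|)$ boundary error, and the comparison term is controlled by $\frac{1}{|\Lambda|}\bigl|\mathrm{tr}\,Q_\Lambda(f)^m-\mathrm{tr}\,Q_\Lambda(p)^m\bigr|\le m\,\|f-p\|_{L^2}$ via Hilbert--Schmidt and operator-norm bounds, so the spectral step is sound for arbitrary $f\in\e$ and any $d$. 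What your approach buys: it avoids the percolation/complementation trick and the nontrivial Lyons--Steif domination theorem, relying only on the standard factorial-moment identity for determinantal processes plus classical Toeplitz asymptotics, and it makes the dimension-independence transparent. What the paper's approach buys: it stays entirely in point-process language, needs no spectral asymptotics or boundary estimates, and produces the moments of the value distribution directly. Both arguments terminate identically, with a compactly supported measure on $[0,1]$ determined by its moments; the genuine difference is in how that moment data is pried out of $\pf$.
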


\section{Definitions}
\label{defs}
In this section, we give precise descriptions of the models under study and discuss how Theorem \ref{rigcomp} is related to Theorems \ref{sin} and \ref{gin}.

A determinantal point process on a space $\Xi$ with background measure $\mu$ and kernel $K: \Xi \times \Xi \to \C$, is a point process whose $n$-point intensity functions (with respect to the measure $\mu^{\otimes n}$) are given by \[ \rho_n(x_1,\cdots,x_n)=\text{ det } \l(K(x_i,x_j)_{i,j=1}^n  \r) .\] 
The kernel $K$ induces an integral operator on $L^2(\mu)$, which must be locally trace class and, additionally, a non-negative contraction. An interesting class of examples is obtained when the integral operator given by $K$ is a projection onto a subspace $\H$ of $L^2(\mu)$.
 
Our first example is the Ginibre ensemble, which is obtained as above with $K(z,w)=e^{z\overline{w}}, d\mu(z)=\frac{1}{\pi}e^{-|z|^2}d\leb(z)$ and $\H$ is the Fock-Bargmann space $\subset L^2(\mu)$ (here $\leb$ is the Lebesgue measure on $\C$). For every $n$, we can consider an $n \times n$ matrix of i.i.d. complex Gaussian entries. The Ginibre ensemble arises as the weak limit (as $n \to \infty$) of the point process given by the eigenvalues of this matrix. The connection between Theorems \ref{rigcomp} and \ref{gin} is fairly straightforward. It has been proved in \cite{GNPS} Theorem 1.1 that the Ginibre ensemble is rigid in the sense of Theorem \ref{rigcomp}. Setting $\Pi=$ the Ginibre ensemble in Theorem \ref{rigcomp}, we deduce that the set  of functions $\{e_{\la}(z):=e^{\ol{\la} z} | \la \in \Pi  \}$ is a.s. complete in the Fock-Bargmann space. But the Fock-Bargmann space is a reproducing kernel Hilbert space with the kernel $K(z,w)=e^{\ol{w}z}$ and the standard complex Gaussian measure as the background measure. Therefore, for a function $f$ in the Fock-Bargmann space, orthogonality of $f$ to $K(\cdot,\la)$ is equivalent to $f(\la)=0$. So the statement that $f(\la)=0 \forall \la \in \Pi$ is equivalent to orthogonality of $f$ to the span of $\{K(\cdot,\la):\la \in \Pi\}$. Therefore, Theorem \ref{rigcomp}  implies that $f$ must be identically 0.

The continuum sine kernel process is given by $K(x,y)=\frac{\sin\pi(x-y)}{\pi(x-y)}, \mu = $ the Lebesgue measure on $\R$, $\H=$ the Fourier transforms of the set of $L^2$ functions supported on $[-\pi,\pi]$ (considered as a subspace of $L^2(\R)$). Here we define the Fourier transform of a Schwarz function $f$ to be $\hat{f}(\xi)= \frac{1}{2\pi} \int_{-\infty}^{\infty} f(x) e^{-ix\xi} dx $, and extend the definition to all $L^2$ functions $f$ by a standard density argument. The continuum sine kernel process arises as the bulk limit of the eigenvalues of the Gaussian Unitary Ensemble (GUE). Setting $f= 1_{[-\pi,\pi]}$ in Theorem \ref{contrig}, we obtain that the continuum sine kernel process is rigid in the sense of Theorem \ref{rigcomp}. Therefore, setting $\Pi$ to be the continuum sine kernel process in Theorem \ref{rigcomp}, we deduce that the functions $h_{\la}(t):=\frac{\sin\pi(\la -t)}{\pi(\la -t )}$, where $\la$ ranges over the points in $\Pi$, are a.s. complete in the Hilbert space $\H=$ the Fourier transforms of the set of $L^2$ functions supported on $[-\pi,\pi]$. Therefore, $\{\breve{h}_{\la}: \la \in \Pi\}$ is complete in $L^2(-\pi,\pi)$, where $\breve{h}$ denotes the inverse Fourier transform. But $\breve{h}_{\la}(x)=e^{i\la x}1_{[-\pi,\pi]}(x)=e_{\la}(x)$. Putting all these together, we obtain the fact that Theorem \ref{rigcomp} implies Theorem \ref{sin}.

For greater details on these point processes, see \cite{HKPV}.

\section{Completeness of random function spaces}

In this Section, we prove Theorem \ref{rigcomp}. Due to the connections discussed in Section \ref{defs}, this will automatically establish Theorems \ref{sin} and \ref{gin}. 
% On the way, we provide a proof of Theorem \ref{cna}.

Before the main theorem, we establish a preparatory result. 

\begin{proposition}
 \label{regularity}
Suppose $\Pi$ is a rigid point process on a locally compact metric space $(E,d)$, in the sense that for any open ball $B$ with  finite radius, the point configuration outside $B$ a.s. determines the number of points $N_B$ of $\Pi$ inside $B$. Assume that for any ball $B$, $\E\l[ N_B \r]<\infty$. Let $A(r)$ denote the closed annulus of thickness $r$ around $B$, and let $\Pi_{A(r)}$ denote the point configuration in $A(r)$ obtained by restricting $\Pi$ to $A(r)$. Then we have \begin{equation} \label{regu} 
\E\l[ N_B \big| \Pi_{A(r)} \r] \to N_B 
\end{equation}
a.s. as $r \to \infty$.
\end{proposition}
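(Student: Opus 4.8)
The plan is to recognize the convergence \eqref{regu} as an instance of L\'evy's upward martingale convergence theorem, with rigidity entering only to identify the limit.

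First I would set up the filtration generated by the growing annuli. For $r>0$ let $\mathcal{F}_r := \sigma(\Pi_{A(r)})$ be the $\sigma$-algebra generated by the restriction of $\Pi$ to $A(r)$. Since $A(r)$ increases to $B^c := E \setminus B$ as $r \to \infty$ (every point outside $B$ lies at finite distance from $B$, hence in $A(r)$ for $r$ large), the family $(\mathcal{F}_r)_{r>0}$ is increasing. The key measure-theoretic fact I would establish is
\[ \bigvee_{r>0} \mathcal{F}_r = \sigma(\Pi_{B^c}), \]
the $\sigma$-algebra generated by the whole configuration outside $B$. This uses second countability and local finiteness: the right-hand side is generated by the counting variables $N(C)$ over bounded Borel sets $C \subset B^c$, and any such $C$ satisfies $\sup_{x\in C} d(x,B) < \infty$, so $C \subset A(r)$ and hence $N(C)$ is $\mathcal{F}_r$-measurable once $r$ is large enough.

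Next, since $\E[N_B] < \infty$ by hypothesis, $N_B$ is integrable, so $M_r := \E[N_B \mid \mathcal{F}_r]$ is a uniformly integrable martingale with respect to the increasing filtration $(\mathcal{F}_r)$. It suffices to prove convergence along an arbitrary sequence $r_n \uparrow \infty$, reducing to the discrete-parameter setting; since the limit will not depend on the chosen sequence, the stated convergence as $r \to \infty$ follows by monotonicity of the $\sigma$-algebras. L\'evy's upward theorem then yields
\[ M_{r_n} \longrightarrow \E\big[ N_B \,\big|\, \textstyle\bigvee_{r} \mathcal{F}_r \big] = \E\big[ N_B \mid \sigma(\Pi_{B^c}) \big] \quad \text{a.s. and in } L^1. \]
Finally I would invoke rigidity to evaluate this limit: by hypothesis $N_B$ is a.s. determined by the configuration outside $B$, that is, $N_B$ is $\sigma(\Pi_{B^c})$-measurable, whence $\E[N_B \mid \sigma(\Pi_{B^c})] = N_B$ almost surely, which is precisely \eqref{regu}.

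The main obstacle is the measure-theoretic identification $\bigvee_r \mathcal{F}_r = \sigma(\Pi_{B^c})$: one must verify carefully that the ``outside'' $\sigma$-algebra is generated by counts of bounded sets and that each such count is captured at some finite radius. Everything else is a direct application of standard martingale convergence, with the rigidity hypothesis used only at the very last step to pin down the limiting random variable as $N_B$ itself.
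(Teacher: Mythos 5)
Your proposal is correct and follows essentially the same route as the paper, whose proof is a one-line appeal to L\'evy's upward theorem for the Doob martingale $M_r = \E\l[ N_B \mid \Pi_{A(r)} \r]$ together with the observation that rigidity forces $M_\infty = N_B$. Your write-up simply fills in the details the paper leaves implicit --- the identification $\bigvee_{r} \sigma(\Pi_{A(r)}) = \sigma(\Pi_{B^{\complement}})$ via local finiteness and second countability, and the reduction to sequences $r_n \uparrow \infty$ --- which is exactly the form of convergence the paper actually uses downstream in the proof of Theorem \ref{rigcomp}.
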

\begin{proof}
 This follows from Levy's 0-1 law and the convergence of the Doob's martingales $ M_r := \E \l[ N_B| \Pi_{A(r)} \r]$ , ${r \ge 0}$ of $N_B$, as $r \to \infty$. Note that $M_{\infty}=N_B$ because $\Pi$ is rigid.
\end{proof}

% The next result is a quantitative formulation of the negative correlation property of the determinantal point processes.
% 
% We begin with a definition:
% \begin{definition}
% \label{negcordef}
%  We call a point process on $\R^d$ negatively associated if, for any two disjoint Borel sets $A,B$ and integers $r,s\ge 0$ the number of points $N(A),N(B)$ satisfy \[ \P \big(   \l( N(A) \ge r \r) \cap \l( N(B) \ge s \r)  \big)  \le \P \big( N(A) \ge r \big) P \big( N(B) \ge s \big) \] 
% This is equivalent to the complementary condition 
% \[  \P \big(   \l( N(A) \le r \r) \cap \l( N(B) \le s \r)  \big)  \le \P \big( N(A) \le r \big)\P \big( N(B) \le s \big)  \]
% \end{definition}
% 
% In the theory of determinantal point processes on $\R^d$, by a standard kernel we mean a Hermitian kernel $\k(x,y)$ which is continuous as a function from $\R^d \times \R^d \to \C$ and is a non-negative trace class contraction when viewed as an integral operator from $L^2(\mu)\to L^2(\mu)$ where $\mu$ is the background measure. For further details, we refer the interested reader to \cite{HKPV}. 
% 
% \begin{theorem}
% \label{cna} 
% Any determinantal point process with a continuous kernel on $\R^d$ and the background measure $\mu$ absolutely continuous wrt Lebesgue measure is negatively correlated as in definition \ref{negcordef}. 
% \end{theorem}

We are now ready to establish the main Theorem \ref{rigcomp}.

% For Theorem \ref{rigcomp}, let $\pi$ be a translation invariant determinantal point process with determinantal kernel $K(\cdot,\cdot)$ on $\R^d$ with a background measure $\mu$ that is mutually absolutely continuous with respect to the Lebesgue measure on $\R^d$, such that $K(\cdot,\cdot)$ is also the projection kernel for the subspace $\H$ that is cannonically associated with $\pi$. Let the map $x\to K(\cdot,x)$ be continuous as a map from $\R^d \to \H$.
% \begin{theorem}
% \label{rigcomp}
% Let $\pi$ be rigid, in the sense that for any ball $B$, the point configuration outside $B$ a.s. determines the number of points $N_B$ of $\pi$ inside $B$. 
% % Furthermore, let the random variable $N_B$, for every ball $B \subset \R^d$, be regular at infinity. \newline 
% Then $\{K(\cdot,x): x \in \pi \}$ is a.s. complete in $\H$.
% \end{theorem}

\begin{proof}[Proof of Theorem \ref{rigcomp}]
Let $\H_0$ be the random closed subspace of $\H$ spanned by the functions $\{K(\cdot,x):x \in \Pi\}$ inside $L^2(\mu)$. We wish to show that a.s. we have $\H_0=\H$.
It suffices to prove that almost surely, we have $K(\cdot,\ze) \in \H_0$ for $\mu$-a.e. $\ze$. 
% This would imply that a.s. $K(\cdot,q) \in \H_0$ simultaneously for all $q \in$ a countable dense subset of the support of $\mu$. Then the continuity of the map $x \mapsto K(\cdot,x)$  on the support of $\mu$ would imply that $K(\cdot,x) \in \H_0$ simultaneously for all $x $ in the support of $\mu$ . This would imply that $\H_0=\H$ a.s. 
To see this, let $f \in \H$ be such that $f$ is orthogonal to $\H_0$. Since  $f \in \H$, therefore $f(x)=\int f(y)K(y,x) d\mu(y)$ for each $x$. But $K(\cdot,x)$ is in $\H_0$ for $\mu$-a.e. $x$, and $f$ is orthogonal to $\H_0$, so $f(x)=0$ for a.e. $x$ with respect to  $\mu$. In other words, $f \equiv 0$ in $L^2(\mu)$, therefore $\H_0=\H$.  

For any realization of $\Pi$, the set of functions $\{K(\cdot,x):x \in \Pi\}$ is trivially in $\H_0$. We need to show that a.s., we have $K(\cdot,x) \in \H_0$ for $\mu$-a.e. point $x \in E \setminus \Pi$. To this end, we will construct, for each $\eps>0$ and $x_0 \in E$ a pair of events $N(\eps,x_0)$ and $J(\eps,x_0)$ such that:
\begin{itemize} \item On the event $N(\eps,x_0)\setminus J(\eps,x_0)$, we have $K(\cdot,x) \in \H_0$ for $\mu$-a.e. $x$ in $B(x_0;\eps)$ \\ \item $\P(J(\eps,x_0))=0$ \\ \item For any $x \notin \Pi$, the event $N(\eps,x)$ occurs for each value of $\eps < d(x,\Pi)$. \end{itemize}

Here $d(x,\Pi)$ is the distance between $x$ and the set $\Pi$ and $B(x_0;\eps)$ is the open ball with centre $\ze$ and radius $\eps$.

We claim that such events $N(\eps,x_0)$ and $J(\eps,x_0)$ are enough to ensure that a.s., we have $K(\cdot,x) \in \H_0$ for $\mu$-a.e. point $x \in E \setminus \Pi$. We will prove this claim in the following paragraphs.

Consider a countable dense set of points $\q$ in $E$. For each point $q \in \q \setminus \Pi$, consider $B(q;r_q)$ where $r_q$ is any rational number between $d(q,\Pi)$ and $d(q,\Pi)/2$. 
We first show that these balls cover $E \setminus \Pi$. Any point $x \in E \setminus \Pi$ is a limit of a sequence of points belonging $\q$. Since $\Pi$ is a locally finite set of points and $x \in E \setminus \Pi$, this sequence will eventually be contained in $\q \setminus \Pi$. If we denote this sequence by $\{q_k\}_{k \ge 0}$, we  have $q_k \to x$ as $k \to \infty$ and $\varliminf_{k \to \infty} r_{q_k} \ge d(x,\Pi)/2 >0$. Thus, $x \in \bigcup_{q \in \q} B(q;r_q)$, as desired. 

Since \[\P\l(\cup_{q \in \q} \cup_{q \in \mathbb{Q}} J(r,q) \r) =0,\] we can assume that we are working on the complement of the event $\cup_{q \in \q} \cup_{q \in \mathbb{Q}} J(r,q) $. Then for each $q \in \q \setminus \Pi$, the event $N(r_q,q)$ occurs. Therefore, a.s. the event $\cup_{q \in \q \setminus \Pi} N(r_q,q)$ occurs. Each event $N(r_q,q)$ implies that for $\mu$-a.e. point $x$ in $B(q;r_q)$ the function $K(\cdot,x)$ is in $\H_0$.  But we have already seen that the sets $B(q;r_q)$ cover $E \setminus \Pi$. Therefore, a.s. it is true that for  $\mu$-a.e. point $x$ in $E \setminus \Pi$, the function $K(\cdot,x)$ is in $\H_0$. This completes the proof of our claim.

We now proceed to construct the events $N(\eps,x_0)$ and $J(\eps,x_0)$ for each $x_0 \in E$ and $\eps>0$. The event $N(\eps,x_0)$ is simply the event that $B(x_0;\eps)\cap \Pi = \phi$. It is obvious from the definition that whenever $x_0 \notin \Pi$, for any $\eps<d(x_0,\Pi)$ the event $N(\eps,x_0)$ occurs. In what follows, we will show that for a fixed $x_0$ and $\eps$, a.s. on the event $N(\eps,x_0)$ the function $K(\cdot,x)$ is in $\H_0$ for $\mu$-a.e. point $x$ in $B(x_0;\eps)$. The null event on which the statement does not hold will be our event $J(\eps,x_0)$. 

For brevity, we will henceforth denote by $B_r$ the open ball of radius $r>0$ centred at $\ze$.

For the points $\{x_i\}_{i=1}^n \in E$, let ${\mathcal{D}}(x_1,\cdots,x_n)$ denote  $\mbox{ det } \bigg[ \l( K(x_i,x_j)\r)_{i,j=1}^n \bigg]$. Then, if we consider the function $ K(\cdot,x) $ as a vector in the Hilbert space $\H$, the squared norm of the projection of $K(\cdot,x)$ onto the  orthogonal complement of $\mbox{Span }\{K(\cdot,x_i), 1\le i \le n\}$ is given by the ratio $\frac{\mathcal{D}(x,x_1,\cdots,x_n)}{\mathcal{D}(x_1,\cdots,x_n)}$. This follows easily from the interpretation of the determinant of a Gram matrix as the squared volume of a parallelopiped. But $\frac{\mathcal{D}(x,x_1,\cdots,x_n)}{\mathcal{D}(x_1,\cdots,x_n)}$ is also equal to the conditional intensity $p(x|x_1,\cdots,x_n)$ (with respect to the background measure $\mu$) of $\Pi$ at $x$ given that $\{x_1,\cdots,x_n\}\subset \Pi$ , see e.g. Corollary 6.6 in \cite{ShTa}. Let us elaborate a little more on this conditioning. Let $R > \eps$. We fix a point configuration $\u_R=\{x_1,\cdots,x_n\}$ in $\ol{B_R} \setminus B_{\eps}$ and consider the point process $\Pi''$  obtained by conditioning $\Pi$ to contain $\u_R$ (with the understanding that $\Pi''$ contains all the points in $\u_R$).  Now, set $\Pi' = \Pi '' \setminus \u_R$. Let $\P_{\u_R}$ be the law of the point process $\Pi'$. Such conditioning is well known in the literature as the Palm measure of $\Pi$ at $\u_R$. For more details on the Palm measure, we refer to \cite{Ka} Chapter 10 or \cite{ShTa} Section 6. 

Let $\o_R$ be the random point configuration obtained by restricting  $\Pi$ to $\ol{B_R} \setminus B_{\eps}$. For any  fixed point configuration $\u_R$ in $\ol{B_R} \setminus B_{\eps}$ we have 
\begin{equation}\label{step1} \E_{\U} [\mbox{ Number of points in }B_{\eps} ] = \int_{B_{\eps}} p(x|\u_R)d\mu(x)  \end{equation}
We now proceed with the left hand side in (\ref{step1}) as 
\begin{align*}
&\E_{\U} [\mbox{ Number of points in }B_{\eps} ] \\ &= \sum_{k=1}^{\infty} \P_{\U} [\mbox{ There are } \ge k \mbox{ points in }B_{\eps}] \\
&\le \sum_{k=1}^{\infty} \P [\mbox{ There are } \ge k \mbox{ points in }B_{\eps} | \o_R=\u_R ] \mbox{ (see Proposition } \ref{techprop1})\\
&=\E [\mbox{ Number of points in }B_{\eps} | \o_R=\u_R  ]
\end{align*}

% The inequality \[\P_{\U} [\mbox{ There are } \ge k \mbox{ points in }B_{\eps}  ] \le \P [\mbox{ There are } \ge k \mbox{ points in }B_{\eps} | \o_R=\u_R ]\]
is a consequence of the negative association property of determinantal point processes, and is proved in Proposition \ref{techprop1} using Theorem \ref{cna}.

Denote by $N_{B_{\eps}}$ the number of points of $\Pi$ in $B_{\eps}$.
By Proposition \ref{regularity}, we have that given any $\delta>0$, we can find $R_{\delta}$ such that except on an event $\Omega_1^{\delta}$ of probability $<\delta$, we have \[\l|\E [N_{B_{\eps}} | \o_{R_{\delta}} ] - N_{B_{\eps}} \r|<\delta.\]

% But, except on an event $\Omega_2^{\eps}$ (which is measurable with respect to the configuration  $\ou$ of points of $\Pi$ outside $B_{\eps}$ ), 
But, on the event $N(\eps,\ze)$, we have $N_{B_{\eps}}=0$. 
% Further, $\P(\Omega_2^{\eps} \cap A_{\ze}^{\complement})=o(1)$ as $\eps \to 0$. To see this, note that due to rigidity the event $\Omega_2^{\eps}:=\{N_{B_{\eps}}>0\}$ is measurable with respect to $\ou$, and satisfies $\lim_{\eps \downarrow 0} \Omega_2^{\eps} = A_{\ze}$. Therefore, \[ \lim_{\eps \downarrow 0} \P(\Omega_2^{\eps} \cap  A_{\ze}^{\complement})=\P[A_{\ze} \cap  A_{\ze}^{\complement}]=0.\] 
% But $x \mapsto K(\cdot,x)$ is continuous, so $x \mapsto ||K(\cdot,x)||_2^2 = K(x,x)$ is continuous. Since $\mu$ does not have atoms, $\int_{B_{\eps}} K(x,x) d\mu(x) \to 0$ as $\eps \to 0$. Hence $\P(\Omega_2^{\eps}) \to 0$ as $\eps \to 0$.
% $\E[N_{B_{\eps}}] \to 0$ as $\eps \to 0$ because the determinantal process $\Pi$ does not have any atoms (that is, the probability of having a point of $\Pi$ at any specified location is 0). 

Hence, on the event $N(\eps,\ze) \setminus \Omega_1^{\delta}$, we have 
\[\int_{B_{\eps}} p(x|\o_{R_{\delta}})d\mu(x) \le \delta \]
Letting $\delta \downarrow 0$ along a summable sequence and $R_{\delta} \uparrow \infty$, we deduce that a.s. on $N(\eps,\ze)$ we have,  \[\lim_{\delta \to 0}\int_{B_{\eps}} p(x|\o_{R_{\delta}})d\mu(x)=0.\] This is true because, by the Borel Cantelli lemma, the event $J(\eps,x_0)$ that $\Omega_1^{\delta}$ occurs infinitely often (along this summable sequence of $\delta$-s) has probability zero. 

By Fatou's lemma, this implies that on the event $N(\eps,\ze)$, a.s. we have \[\int_{B_{\eps}} \varliminf_{\delta \to 0} p(x|\o_{R_{\delta}})d\mu(x)=0.\] This implies that on the event $N(\eps,\ze)$,  a.s. we have $\varliminf_{\delta \to 0}  p(x|\o_{R_{\delta}})=0$ for almost every $x \in B_{\eps}$ (with respect to the measure $\mu$). By our previous discussion, at the beginning of the proof, regarding the connection between the squared norms of projections and conditional intensities, this means that on the event $N(\eps,\ze)$, a.s. we have  $K(\cdot,x) \in \H_0$ for a.e.  $x \in B_{\eps}$ (with respect to $\mu$), as desired.
% But $\ze \in B_{\eps}$. By the continuity of the map $x \mapsto K(\cdot,x)$ on the support of $\mu$, we have $K(\cdot,\ze) \in \H_0$ a.s. on the event $(\Omega_2^{\eps})^{\complement} \cap A_{\ze}^{\complement}$. Letting $\eps \downarrow 0$ (which implies $\Omega_2^{\eps} \downarrow A_{\ze}$), we have $K(\cdot,\ze) \in \H_0$ a.s. on $A_{\ze}^{\complement}$, as desired.
\end{proof}

We now  establish the precise negative association inequality necessary for the above theorem :

\begin{proposition}
 \label{techprop1}
 \[\P_{\U} [\mbox{ There are } \ge k \mbox{ points in }B_{\eps} ] \le \P [\mbox{ There are } \ge k \mbox{ points in }B_{\eps} | \o_R=\u_R ]\] for almost every  configuration $\u_R$, the probability measure on $\u_R$ being that induced by the random variable $\o_R$.
\end{proposition}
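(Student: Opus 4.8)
The plan is to reduce the inequality to the negative association estimate of Theorem \ref{cna}, applied not to $\Pi$ itself but to the reduced Palm process. Write $A = \ol{B_R} \setminus B_{\eps}$ for the annular shell carrying the conditioning data, and recall that $\P_{\U}$ is the law of the reduced Palm process $\Pi' = \Pi'' \setminus \u_R$, where $\Pi''$ is $\Pi$ conditioned to contain the fixed configuration $\u_R \subset A$. The first key input is that, for a determinantal process, the reduced Palm process $\Pi'$ is again determinantal (with an explicitly modified kernel); I would invoke this from \cite{ShTa} (Section 6). In particular, Theorem \ref{cna} applies verbatim to $\Pi'$ under $\P_{\U}$.

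The second key input is a disintegration identity relating the two conditionings in the statement. Since $B_{\eps}$ and $A$ are disjoint and $\u_R \subset A$, we have $N_{B_{\eps}}(\Pi'') = N_{B_{\eps}}(\Pi')$, so it is harmless to count points of the reduced process. Moreover, conditioning on the \emph{exact} annular configuration $\o_R = \u_R$ is the same as first passing to the Palm measure $\P_{\U}$ (which forces $\u_R \subset \Pi''$) and then further conditioning on the absence of any extra point in $A$; symbolically,
\[
\P\big[\, \cdot \mid \o_R = \u_R \,\big] = \P_{\U}\big[\, \cdot \mid N_A(\Pi') = 0 \,\big].
\]
I would justify this as the standard disintegration of a point process with respect to its restriction to $A$, valid for almost every $\u_R$ under the law of $\o_R$; for such $\u_R$ one also has $\P_{\U}(N_A = 0) > 0$, a positivity that is exactly what makes the conditioning $\o_R = \u_R$ meaningful and which, for bounded $A$, reflects the positivity of the hole probability of a determinantal process. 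It thus suffices to prove, under $\P_{\U}$,
\[
\P_{\U}\big(N_{B_{\eps}} \ge k\big) \le \P_{\U}\big(N_{B_{\eps}} \ge k \mid N_A = 0\big).
\]

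Finally, I would derive this last inequality from Theorem \ref{cna} applied to $\Pi'$ with the disjoint sets $B_{\eps}$ and $A$. The complementary form of negative association (Definition \ref{negcordef}), with thresholds $k-1$ and $0$ and using $\{N_A \le 0\} = \{N_A = 0\}$, gives
\[
\P_{\U}\big(N_{B_{\eps}} \le k-1,\, N_A = 0\big) \le \P_{\U}\big(N_{B_{\eps}} \le k-1\big)\,\P_{\U}\big(N_A = 0\big).
\]
Subtracting both sides from $\P_{\U}(N_A = 0)$ reverses the inequality and yields $\P_{\U}(N_{B_{\eps}} \ge k,\, N_A = 0) \ge \P_{\U}(N_{B_{\eps}} \ge k)\,\P_{\U}(N_A = 0)$; dividing by $\P_{\U}(N_A = 0) > 0$ produces precisely $\P_{\U}(N_{B_{\eps}} \ge k \mid N_A = 0) \ge \P_{\U}(N_{B_{\eps}} \ge k)$, i.e. the upper-tail event for $B_{\eps}$ is positively correlated with the hole event in $A$.

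The step I expect to be the main obstacle is the disintegration identity together with its measure-theoretic bookkeeping: matching the ``almost every $\u_R$'' in the statement (with respect to the law of $\o_R$) against the reference measure governing the Palm disintegration, confirming that the reduced Palm process is genuinely determinantal so that Theorem \ref{cna} is available for it, and checking the positivity $\P_{\U}(N_A = 0) > 0$. Once this correspondence between ``conditioning on the exact configuration $\u_R$'' and ``Palm measure plus a hole in $A$'' is secured, the remainder is the short, purely algebraic negative-association manipulation displayed above.
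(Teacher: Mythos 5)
Your proposal is correct and follows essentially the same route as the paper's own proof: the paper likewise passes to the reduced Palm process $\Pi'$ (determinantal by \cite{ShTa}, Corollary 6.6), applies Theorem \ref{cna} to the disjoint sets $B_{\eps}$ and $\ol{B_R}\setminus B_{\eps}$ to get positive correlation between the upper-tail event and the hole event, uses positivity of the hole probability for almost every $\u_R$ to divide, and then identifies $\P_{\U}\l(\,\cdot \mid N_{\ol{B_R}\setminus B_{\eps}}=0\r)$ with $\P\l(\,\cdot \mid \o_R=\u_R\r)$. The disintegration identity you flag as the main obstacle is exactly what the paper isolates and proves separately as Proposition \ref{techprop2} (via the Campbell--Palm formula of \cite{Ka}), so your outline matches the paper's structure step for step, and even makes explicit the complementation algebra that the paper leaves implicit when invoking Theorem \ref{cna}.
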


\begin{proof}
  Recall from the proof of Theorem \ref{rigcomp} that $\P_{\U}$ is the law of the point process $\Pi'$ .
  It is known that  $\Pi'$ is again a determinantal point process on $E$ with background measure $\mu$, see \cite{ShTa} Corollary 6.6.  
  Applying Theorem \ref{cna} to $\Pi'$, we get that \[\P_{\u_R} \l( \mbox{ There are } \ge k \mbox{ points of } \Pi' \mbox{ in } B_{\eps} \cap  \mbox{ There is }  \mbox{ no point of } \Pi' \mbox{ in } \ol{B_R} \setminus B_{\eps} \r)\]\[ \ge \P_{\u_R} \l( \mbox{ There are } \ge k \mbox{ points of } \Pi' \mbox{ in } B_{\eps} \r)  \P_{\u_R} \l( \mbox{ There is }  \mbox{ no point of } \Pi' \mbox{ in } \ol{B_R} \setminus B_{\eps} \r). \]

Since $\u_R$ is  a  realization of the full point configuration $\o_R$ of $\Pi$ in $\ol{B_R} \setminus B_{\eps}$, therefore $\P_{\u_R} \l( \mbox{ There is }  \mbox{ no point of } \Pi' \mbox{ in } \ol{B_R} \setminus B_{\eps} \r) >0$ (for almost every such configuration $\u_R$; the probability measure on $\u_R$ being that induced by the random variable $\o_R$).

Hence the last inequality can be rephrased as  \[  \P_{\u_R} \l( \mbox{ There are } \ge k \mbox{ points of } \Pi' \mbox{ in } B_{\eps} \r) \] \[\le \P_{\u_R} \l( \mbox{ There are } \ge k \mbox{ points of } \Pi' \mbox{ in } B_{\eps} | \mbox{ There is }  \mbox{ no point of } \Pi' \mbox{ in } \ol{B_R} \setminus B_{\eps} \r)\]\[= \P\l( \mbox{ There are } \ge k \mbox{ points of } \Pi' \mbox{ in } B_{\eps} | \o_R = \u_R \r).\]
% Under the conditioning $ \u_R \subset \Pi $, the event that there is no point of $\Pi'$ in  $\ol{B_R} \setminus B_{\eps}$ corresponds to $\o_R=\u_R$. 
The last equality is a relation between the Palm measure and the conditional probability measure of $\Pi$ under the conditioning $\o_R = \u_R$. While this relation is intuitively straightforward, it is explained and proved in Proposition \ref{techprop2}. This completes the proof of our desired inequality.

\end{proof}

 \begin{proposition}
  \label{techprop2}
  $\P_{\U}( M | \mbox{ There is }  \mbox{ no point of } \Pi' \mbox{ in } \ol{B_R} \setminus B_{\eps}  ) = \P( M | \o_R = \U)$, where $M$ is an event which depends only on the points in $B_{\eps}$.
 \end{proposition}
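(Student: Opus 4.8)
The plan is to make rigorous the intuition that the reduced Palm measure at $\U$, further conditioned to place no additional point in the annulus $A := \ol{B_R}\setminus B_\eps$, is exactly the regular conditional law of $\Pi$ given that its restriction to $A$ equals $\U$. Since $M$ depends only on the configuration in $B_\eps$, and the conditioning events on both sides are measurable with respect to the restriction of $\Pi$ to $D := \ol{B_R}$, I would first reduce to the finite determinantal process $\tilde\Pi := \Pi|_D$, whose kernel is the (trace class) restriction of $K$ to $D \times D$. Two compatibility facts are needed here: (i) the joint law of $(\Pi \cap B_\eps,\, \Pi \cap A)$ is unchanged when $\Pi$ is replaced by $\tilde\Pi$, since both configurations are functions of $\Pi \cap D$; and (ii) the reduced Palm measure of $\Pi$ at $\U$, restricted to $D$, coincides with the reduced Palm measure of $\tilde\Pi$ at $\U$, which follows from the Campbell--Mecke formula applied to functionals supported on $D$. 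After this reduction both sides of the claimed identity are computed within $\tilde\Pi$, which (being a determinantal process on a relatively compact set with a trace class kernel) is a finite point process admitting Janossy densities $j_m$ with respect to $\mu$.

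Next I would express both conditional laws through these densities. Writing $\U = \{x_1,\dots,x_n\}$, the joint ``density'' of the events $\{\Pi\cap B_\eps = \{y_1,\dots,y_k\}\}$ and $\{\Pi \cap A = \U\}$ is $j_{k+n}(y_1,\dots,y_k,x_1,\dots,x_n)$, so the regular conditional law of $\Pi\cap B_\eps$ given $\o_R = \U$ has, for $\mu^n$-a.e.\ $\U$, an unnormalized density proportional to $j_{k+n}(y_1,\dots,y_k,\U)$ in the variable $\{y_1,\dots,y_k\}\subset B_\eps$. On the other side, the reduced Palm measure $\P_{\U}$ has Janossy densities
\[
j^{\U}_m(y_1,\dots,y_m) = \frac{j_{n+m}(x_1,\dots,x_n,y_1,\dots,y_m)}{\rho_n(x_1,\dots,x_n)},
\]
a standard identity of Palm theory (see \cite{ShTa} Section 6 and \cite{Ka} Chapter 10). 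Conditioning $\P_{\U}$ on the event that $\Pi'$ has no point in $A$ forces all points of $\Pi'$ into $B_\eps$, so the conditional law of $\Pi' \cap B_\eps$ has unnormalized density $j^{\U}_k(y_1,\dots,y_k) = j_{n+k}(\U, y_1,\dots,y_k)/\rho_n(\U)$ in $\{y_1,\dots,y_k\}\subset B_\eps$.

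Finally I would observe that the two unnormalized densities coincide up to the constant factor $1/\rho_n(\U)$, which does not depend on the $y$-variables, because Janossy densities are symmetric in their arguments and hence $j_{k+n}(y_1,\dots,y_k,\U) = j_{n+k}(\U,y_1,\dots,y_k)$. Two probability measures with proportional densities are equal, which yields the proposition for $\mu^n$-a.e.\ $\U$ --- exactly the ``almost every configuration $\U$'' appearing in the statement. The main obstacle is not the algebra but the measure theory: giving the expression $\P(M \mid \o_R = \U)$ a precise meaning as a regular conditional probability (the event $\{\o_R = \U\}$ is itself null), and justifying that its disintegration is represented $\mu^n$-a.e.\ by the Janossy-density formula above. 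I would handle this via the standard existence of regular conditional distributions on the Polish configuration space, together with the disintegration of the $n$-th factorial moment measure of $\tilde\Pi$ over its reduced Palm kernels, which is precisely the content of the Campbell--Mecke formula invoked in the reduction step.
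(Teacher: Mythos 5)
Your proposal is correct, but it takes a genuinely different route from the paper. The paper proves the identity directly for a \emph{general} point process $\xi$: it fixes a test function $h$ supported in the annulus $V=\ol{B_R}\setminus B_{\eps}$, applies the Mecke--Campbell relation (Lemma 10.2 in \cite{Ka}) to move between Palm expectations and expectations against the random measure $\xi(ds)$, exploits the key pointwise identity $1_{M\cap N}(\xi-\del_s)=1_M(\xi)1(\xi^V=\{s\})$, and then identifies the law of $\xi^V$ on one-point configurations as $\P_s(N)\rho_1(ds)$ by taking $M=\Omega$; the case $|\U|=1$ is done in detail and the general case is asserted to be similar. You instead restrict to the compact ball $\ol{B_R}$, invoke the determinantal structure to get a finite process with Janossy densities, and run a Bayes/disintegration argument: both conditional laws have unnormalized densities proportional to $j_{k+n}(y_1,\dots,y_k,\U)$, hence coincide. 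The two arguments rest on the same foundation (your Palm--Janossy identity $j^{\U}_m=j_{n+m}(\U,\cdot)/\rho_n(\U)$ and your commutation of Palm with restriction are themselves consequences of the Campbell--Mecke formula), but the trade-offs differ: the paper's computation needs no absolute continuity or Janossy-density machinery and so applies to an arbitrary point process with locally finite intensity, while yours treats all cardinalities $|\U|=n$ uniformly and makes the mechanism transparent (``proportional densities, same normalization''), at the cost of needing Janossy densities for the compressed kernel (automatic here, since $K$ is locally trace class and closed balls are compact) and of the extra compatibility step (ii), which you correctly reduce to uniqueness of the Campbell disintegration together with the equality of correlation functions of $\Pi$ and $\Pi|_{\ol{B_R}}$ on $\ol{B_R}$. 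Your closing remarks about regular conditional probabilities and a.e.\ statements in $\U$ address exactly the points the paper also treats only up to null sets, so there is no gap on that score.
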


 \begin{proof}
  We will prove this, for ease of demonstration, in the case $|\U|=1$; the general case follows on similar lines. In this setting, we will prove a statement connecting the Palm and conditional measures of a general point process, of which our  situation is a special case. Here $|\U|$ is the number of points in $\U$. 
  
  Consider a point process $\xi$ on the second countable locally compact Hausdorff space $E$  and a Borel set $V \subset E$. Let $\rho_1$ denote the first intensity measure of $\xi$. Let $N$ denote the event that there are no points in $V$, and $M$ be any event that depends only on the points in $V^{\complement}$.  We will denote the point process $\xi$ conditioned to contain $s$ to be $\xi_s$. By definition, $\xi_s$ contains the point $s$. We will denote by $\del_s$ the delta measure at $s$. Let $\P$ denote the law of $\xi$ and $\P_s$  denote the law of $\xi_s - \del_s$. Let $\xi^V$ denote the restriction of $\xi$ to the set $V$.  We want to show that, for $\rho_1$-a.e. point $x$ in $V$ (such that $\P_x(N)>0$), we have $\P_x(M|N)=\P(M|\xi^V=\{x\})$.
  
%   Let $N$ denote the event that there are no points of a point process in $V$, and $M$ be any event that depends only on the points in $B_{\eps}$. 
  Let $h$ be any non-negative real valued compactly supported measurable function whose support is contained in $V$. 
%   Let $\rho_1$ denote the first intensity measure of the point process $\xi$.
  It suffices to show that \[\int h(s) \P_s(M|N) \P_s(N) \rho_1(ds) = \int h(s) \P(M|\xi^V=\{s\}) \P_s(N) \rho_1(ds)   \] for all such $h$.
  In what follows, we will make repeated use of the following relation, which holds for any measurable function $f: E \times \mathcal{N} \to \mathbb{R}_{+}$ (where $\mathcal{N}$ denotes the space of counting measures corresponding to locally finite point configurations on $E$ and $\R_+$ denotes the non-negative real numbers). In this setting, we have \begin{equation}
                                                               \label{relat}
                                                               \l[\E f(s,\xi_s -\del_s)\r] \rho_1 (ds) = \E \l[ f(s,\xi - \del_s) \xi(ds) \r] .
                                                             \end{equation}
  This is a restatement of Lemma 10.2 in \cite{Ka}. This also follows from (6.1) in \cite{ShTa} (for the situation $|\U|>1$ look at (6.5) in \cite{ShTa}). Here $\xi(ds)$ refers to the counting measure induced by the particular realization of the point process $\xi$.
  
  We have,
  \[  \int h(s) \P_s(M|N) \P_s(N) \rho_1(ds) = \int h(s) \P_s(M \cap N) \rho_1(ds) \]\[  = \int  \E [ h(s) 1_{M \cap N}(\xi_s -\del_s)] \rho_1(ds)   = \E \l( \int h(s) 1_{M\cap N}(\xi -\del_s) \xi(ds) \r). \] In the last equality we have used relation (\ref{relat}). Observe that \[1_{M\cap N}(\xi -\del_s)= 1_M(\xi)1(\xi^V = \{s\}),\] because the event $M$ depends only on the points in $V^{\complement}$ and $s \in V$. Since $h$ is supported on $V$, we have \[  \int h(s) 1_{M\cap N}(\xi -\del_s) \xi(ds) = h(\xi^V) 1_M(\xi) 1(|\xi^V|=1) .\] Therefore,  \[ \int h(s) \P_s(M|N) \P_s(N) \rho_1(ds) =  \E \l( \int h(s) 1_{M\cap N}(\xi -\del_s) \xi(ds) \r)    \]
  \begin{equation}
  \label{techrel}                                                                                                                                                                                                                                                                                                                                                                                                                                                                                                                                                                                                                                                                  
   = \E \l[ h(\xi^V) 1_M(\xi) 1(|\xi^V|=1) \r] = \int_{V}     h(s) \P(\xi \in M | \xi^V=\{s\}) d\P_{\xi^V}(\{s\}) 
   \end{equation}                                                                                                                                                                                                                                                                                                                                                                                                                                                                                                                                                                                                                                                      where $\P_{\xi^V}$ is the marginal distribution of $\xi^V$.   All that remains to show, therefore, is that on the event $\{|\xi^V|=1\}$, we have the following equality of measures (on the set $V$): \[   d\P_{\xi^V}(\{s\})=\P_s(N)\rho_1(ds).\]  However, this follows from  relation (\ref{techrel}), as discussed below.
%   By setting $h(s)=\theta(s)$ in (\ref{techrel}), where $\theta$ is a non-negative function compactly supported on $V$ and setting $M=\Omega$ (the universal event), we get \[  \int_{V} \theta(s) \P_s(N) \rho_1(ds) = \E \l[\theta(\xi^V) 1({|\xi^V|=1})\r] = \int_{V} \theta(s) d\P_{\xi^V}(\{s\}).  \] This holds for all such $\theta$, giving us the desired equality of measures. This completes the proof.
 By setting  $M=\Omega$ (the universal event) in (\ref{techrel}), we get \[  \int_{V} h(s) \P_s(N) \rho_1(ds) = \E \l[h(\xi^V) 1({|\xi^V|=1})\r] = \int_{V} h(s) d\P_{\xi^V}(\{s\}).  \] This holds for all such $h$, giving us the desired equality of measures. This completes the proof.
\end{proof}

\section{Rigidity and Tolerance for certain determinantal point processes} 
\label{rig}
% Let $f$ be a function $\td \to [0,1]$. Then multiplication by $f$ is a non-negative contraction operator from $L^2(\td)$ to itself. Under Fourier conjugation, this gives rise to a non-negative contraction opertaor   $Q: \ell^2(\Z^d) \to \ell^2(\Z^d)$. This, in turn, gives rise to a determinantal point process $\pf$ on $\Z^d$ in a cannonical way, for details see \cite{Ly}. For a point configuration $\o$ drawn from the distribution of $\pf$, we denote by $\o(\underline{k})$ the indicator function of having a point at $\underline{k}\in \Z^d$ in the configuration $\o$. We denote by $\ou$ the configuration  of points on $\Z^d \setminus \ze$ obtained by restricting $\o$ to $\Z^d \setminus \ze$, where $\ze$ denotes the origin.  For a point process $\pi$ on a space $\Xi$, we denote by $[\pi]$ the (random) counting measure obtained from a realisation of $\pi$. The $k$-point intensity functions of a point process, when it exists, will be denoted by $\rho_k, k \ge 1$. For the processes $\pf$, all intensity functions 
% exist. Moreover, translation invariance of $\pf$ implies that $\rho_k(x_1+x,\cdots,x_k+x)=\rho_k(x_1,\cdots,x_k)$ for all $x,x_1,\cdots,x_k \in \Z^d$, in particular, $\rho_1$ is a constant in $[0,1]$.
% 
% In the paper \cite{LySt} it was conjectured that all determinantal processes obtained in this way are insertion and deletion tolerant, meaning that $\P[\o(\ze)=1 | \ou ]>0$  and $\P[\o(\ze)=0 | \ou ]>0$. We answer this question in the negative, showing that for $f$ which is the indicator function of an interval, this is not true. 

In this section we discuss the insertion and deletion tolerance question from \cite{LySt}, principally the proof of Theorem \ref{indic}.

We will use the following general observation for determinantal point processes given by a projection kernel:

\begin{proposition}
\label{varlst}
Consider a determinantal point process $\Pi$ on a  locally compact space $\Xi$ with determinantal kernel $K(\cdot,\cdot)$ and background measure $\mu$, such that $K$ is  a projection as an integral operator on $L^2(\mu)$. Let $\psi$ be a compactly supported function on $\Xi$.  Then \begin{equation} \label{vlst} \var \l[ \int \psi \, d[\Pi] \r] = \frac{1}{2} \iint |\psi(x)-\psi(y)|^2 |K(x,y)|^2 \, d\mu(x) d\mu(y)   \end{equation}
\end{proposition}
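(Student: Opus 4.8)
The plan is to compute the first two moments of the linear statistic $Z := \int \psi\,d[\Pi] = \sum_{x \in \Pi}\psi(x)$ directly from the one- and two-point intensity functions, assemble the variance, and then match the resulting expression against the claimed double integral by an elementary expansion. Since $\psi$ is compactly supported and $K$ is locally trace class, all the integrals appearing below converge absolutely, which is what licenses the use of the Campbell-type intensity formulas and of Fubini's theorem throughout.

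First I would record the two moment computations. Using $\rho_1(x) = K(x,x)$ we have $\E[Z] = \int \psi(x) K(x,x)\,d\mu(x)$. For the second moment, splitting the double sum $\sum_{x,y\in\Pi}\psi(x)\overline{\psi(y)}$ into its diagonal and off-diagonal parts gives
\[
\E\big[|Z|^2\big] = \int |\psi(x)|^2 K(x,x)\,d\mu(x) + \iint \psi(x)\overline{\psi(y)}\,\rho_2(x,y)\,d\mu(x)\,d\mu(y),
\]
where, by the determinantal structure, $\rho_2(x,y) = K(x,x)K(y,y) - K(x,y)K(y,x)$. Because $K$ is the kernel of an orthogonal projection it is self-adjoint, so $K(y,x) = \overline{K(x,y)}$ and hence $K(x,y)K(y,x) = |K(x,y)|^2$. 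Subtracting $|\E[Z]|^2 = \iint \psi(x)\overline{\psi(y)} K(x,x)K(y,y)\,d\mu(x)\,d\mu(y)$ cancels the product term and leaves
\[
\var[Z] = \int |\psi(x)|^2 K(x,x)\,d\mu(x) - \iint \psi(x)\overline{\psi(y)}\,|K(x,y)|^2\,d\mu(x)\,d\mu(y).
\]

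It then remains to check that this coincides with the right-hand side of (\ref{vlst}). Expanding $|\psi(x)-\psi(y)|^2 = |\psi(x)|^2 + |\psi(y)|^2 - \psi(x)\overline{\psi(y)} - \overline{\psi(x)}\psi(y)$, the crucial ingredient is the reproducing identity $\int |K(x,y)|^2\,d\mu(y) = K(x,x)$, which follows from the projection relation $K^2 = K$ (in kernel form $\int K(x,y)K(y,x)\,d\mu(y) = K(x,x)$) together with $K(y,x)=\overline{K(x,y)}$. Using this identity and the symmetry $|K(x,y)|^2 = |K(y,x)|^2$, the $|\psi(x)|^2$ and $|\psi(y)|^2$ contributions each integrate to $\int |\psi(x)|^2 K(x,x)\,d\mu(x)$, while the two cross terms are equal to one another; collecting the factors of $\tfrac12$ reproduces precisely the two terms in the displayed formula for $\var[Z]$ above. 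The only point requiring care is the absolute integrability justifying the moment identities and the interchange of summation and integration, and this is exactly where compact support of $\psi$ and the local trace-class property of $K$ are used; the remaining manipulations are purely algebraic and rely only on self-adjointness of $K$ and the projection identity.
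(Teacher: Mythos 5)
Your proof is correct and follows essentially the same route as the paper's: both compute the variance from the one- and two-point intensities, use $\rho_2(x,y)=K(x,x)K(y,y)-|K(x,y)|^2$ together with self-adjointness, invoke the projection identity $K(x,x)=\int |K(x,y)|^2\,d\mu(y)$, and finish by the symmetric expansion of $|\psi(x)-\psi(y)|^2$. The only cosmetic difference is that you verify the claimed identity by expanding its right-hand side, whereas the paper runs the same algebra in the forward direction; this is not a substantive distinction.
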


\begin{proof}
Denote by $\rho_1$ and $\rho_2$ respectively the one and two-point correlation functions of $\Pi$. Then we can write
\begin{equation}
\label{vlst1} 
\var \l[ \int \psi \, d[\Pi] \r] = \int \psi(x)\ol{\psi(y)} \rho_2(x,y) d\mu(x)d\mu(y) + \int \l|\psi(x)\r|^2 \rho_1(x) d\mu(x) - \l|\int \psi(x) \rho_1(x) d\mu(x)\r|^2.
\end{equation}
But $\rho_1(x)=K(x,x)$ and $\rho_2(x,y)=K(x,x)K(y,y) - |K(x,y)|^2$. Using this, the expression for the variance in (\ref{vlst1}) reduces to 
\begin{equation} \label{vlst2}   \iint |\psi(x)|^2 K(x,x) d\mu(x)  - \iint \psi(x)\ol{\psi(y)} |K(x,y)|^2 d\mu(x)d\mu(y). \end{equation}
But since $K$ is the integral kernel corresponding to a projection operator and $K(y,x)=\ol{K(x,y)}$, we have $K(x,x)=\iint |K(x,y)|^2 d\mu(y)$. 
Using this, the expression for the variance in (\ref{vlst2}) reduces to 
\[ \iint \l(  |\psi(x)|^2 - \psi(x)\ol{\psi(y)}  \r) |K(x,y)|^2 d\mu(x) d\mu(y) =\frac{1}{2} \iint |\psi(x)-\psi(y)|^2 |K(x,y)|^2 \, d\mu(x) d\mu(y),  \] as desired. In the last step, we have used  symmetry in $x$ and $y$.
\end{proof}

\begin{proof}[Proof of Theorem \ref{indic}]
We will approach this question by estimating the variance of linear statistics of $\pf$. A similar approach has been used in \cite{GNPS} to obtain rigidity behaviour for the Ginibre ensemble and the zero process of the standard planar Gaussian analytic function. 

Let $\ph$ be a $C_c^{\infty}$ function on $\R$ which is $\equiv 1$ in a neighbourhood of the origin. Viewed as a  function on $\Z$,   $\ph$ is compactly supported and $=1$ at the origin. Let $\ph_L$ be defined by $\ph_L(x)=\ph(x/L)$. Since $f$ is the indicator function of an interval $I \subset \t$, therefore the determinantal kernel $K$ of $\pf$ gives rise to a projection as an integral operator on $\ell^2(\Z)$.  Applying Proposition \ref{varlst} with $\Pi=\pf$, $\Xi=\Z$, $\mu=\text{the counting measure on }\Z$ and $\psi=\ph_L$ we get 
\begin{equation}
\label{var}                                                                                                                                                                                                                                                                                                                                                                                                                                                                                                                                                    
\var \l[ \int \ph_L \, d[\pf] \r] = \frac{1}{2} \sum_{i,j \in \Z} |\ph(\frac{i}{L})-\ph(\frac{j}{L})|^2 |\hat{f}(i-j)|^2 .                                                                                                                                                                                                                                                                                                                                                                                                                                                                                                                                                   \end{equation}
Observe that translating the interval $I \subset \t$ leaves the measure $\pf$ invariant, so, without loss of generality, we take $I$ to be corresponding to the interval $[-a,a]$ where $\t$ is parametrized as $(-\pi,\pi]$ and $0 < a < \pi$. Then $\hat{f}(k)=c(a) {\sin{ak}}/{k}$ where $c(a)$ is a constant. This implies that, for some constant $c>0$, we have 
\begin{equation}
\label{est1} 
\var \l[ \int \ph_L \, d[\pf] \r] = c \sum_{i,j \in \Z} |\ph(\frac{i}{L})-\ph(\frac{j}{L})|^2 \l( \sin^2a(i-j) \r) |i-j|^{-2} .  
\end{equation}
This, in turn, implies (using $|\sin \theta| \le 1$) that
\begin{equation}
\label{est2} 
\var \l[ \int \ph_L \, d[\pf] \r] \le c \sum_{i,j \in \Z} |\ph(\frac{i}{L})-\ph(\frac{j}{L})|^2  |(i-j)/L|^{-2} L^{-2}.  
\end{equation}
Hence we have 
\begin{equation}
\label{est3} 
\varlimsup_{L \to \infty} \var \l[ \int \ph_L \, d[\pf] \r] \le c \int \int \l(\frac{\ph(x)-\ph(y)}{x-y}\r)^2 \, d\leb(x)d\leb(y) 
\end{equation}
where $\leb$ denotes the Lebesgue measure on $\R$.

For any $\C_c^1$ functions $\psi_1,\psi_2$ on $\R$,  we define the form \begin{equation} \label{form} \L(\psi_1,\psi_2)=\int \int \frac{\l(\psi_1(x)-\psi_1(y)\r) \l( \psi_2(x) - \psi_2(y) \r)}{(x-y)^2} \, d\leb(x)d\leb(y).      \end{equation}
It is known that $\L(\psi,\psi)$ is related  to the $H^{1/2}$ norm of $\psi$. 

A simple calculation shows that for any $\la>0$, we have $\L((\psi_1)_{\la},(\psi_2)_{\la})=\L(\psi_1,\psi_2)$. In particular, this implies that $\L(\psi_1,(\psi_2)_{\la})=\L((\psi_1)_{1/\la},\psi_2)$. Further, we will see in Proposition \ref{decay} that $\L(\psi,\psi_{{\la}^{-1}}) \to 0$ as $\la \to 0$. 

For an integer $n>0$, let $0<\la=\la(n)<1$ be such that for $\ph$ as above, $|\L(\ph,\ph_{\la^{-i}})|\le 1/2^i$ for  $1 \le i \le n$. Such a choice can be made because of the observations in the previous paragraph. Define $\Phi^n=\l( \sum_{i=1}^n  \ph_{\la^{-i}} \r)/n$.  Note that $\Phi^n \equiv 1$ in a neighbourhood of $\mathbf{0}$ in $\R$, and the same is true for all scalings $\Phi^n_L$ of $\Phi^n$ whenever $L \ge 1$. 

Let $L=L(n)>1$ be such that 
\[ \var \l[ \int \Phi^n_L \, d[\pf] \r] \le c \L(\Phi^n,\Phi^n) + \frac{1}{n}. \]
But $\L(\Phi^n,\Phi^n)=\frac{1}{n^2}\l( \sum_{i,j=1}^n \L(\ph_{\la^{-i}}, \ph_{\la^{-j}} )  \r)$. Observe that \[ \L(\ph_{\la^{-i}}, \ph_{\la^{-j}} )= \L(\ph,\ph_{\la^{-|i-j|}}) \le 2^{-|i-j|}.\] This implies that \[\sum_{i,j=1}^n \L(\ph_{\la^{-i}}, \ph_{\la^{-j}}) \le C(\ph)n.\] Hence  $ \var \l[ \int \Phi^n_L \, d[\pf] \r] \le C(\ph)/n$.

By the Borel-Cantelli lemma, we have, as $n \to \infty$, \begin{equation} \label{conc} \l|  \int \Phi^{2^n}_L \, d[\pf] - \E [ \int \Phi^{2^n}_L \, d[\pf]] \r| \to 0. \end{equation}

But $ \int \Phi^{2^n}_L \, d[\pf] = \o(\mathbf{0})+ \int_{\Z \setminus \mathbf{0}} \Phi^{2^n}_L \, d[\pf]$, and the second term can be evaluated if we know $\ou$. $\E [ \int \Phi^{2^n}_L \, d[\pf]]$ can also be computed explicitly in terms of the first intensity measure of $\pf$. This implies that from (\ref{conc}), we can deduce the value of $\o(\mathbf{0})$ by letting $n \to \infty$.

Thus,  $\ou$ a.s. determines the value of $\o(\mathbf{0})$. Since both the events $\o(\mathbf{0})=0$ and $\o(\mathbf{0})=1$ occur with positive probability, therefore  the events  $\{\P[\o(\mathbf{0})=1 | \ou ]=0\}$ and  $\{\P[\o(\mathbf{0})=0 | \ou ]=0\}$ both have positive probability (in $\ou$).  
\end{proof}

\begin{remark}
For $f$ which is the indicator function of a finite, disjoint union of intervals $\subset \t$, we have $|\hat{f}(k)|\le c/|k|$, hence the same argument and the same conclusion as Theorem \ref{indic} holds for such $f$.
\end{remark}

\begin{remark}
A similar argument  shows that, in fact, for any finite set $S \subset \Z$, the point configuration of $\pf$ restricted to  $S^{\complement}$ a.s. determines the number of points of $\pf$ in $S$, when $f$ is the indicator function of an interval.
\end{remark}

A similar class of determinantal point processes in the continuum can be obtained by considering $L^2$ functions $f:\R^d \to [0,1]$. The multiplication operator $M_{f}$ defined by such a function $f$ is clearly a contraction on $L^2(\R^d)$. By considering the Fourier conjugate of such an operator, we get another contraction on $L^2(\R^d)$, which gives us a translation invariant determinantal point process $\pf$ in $\R^d$. One of the most important examples of such a point process is the sine kernel process on $\R$, which is defined by the determinantal kernel $\frac{\text{sin}\pi(x-y)}{\pi(x-y)}$ with the Lebesgue measure on $\R$ as the background measure. Here the relevant function $f$ is the indicator function of the interval $[-\pi,\pi]$. For details, see \cite{AGZ}. More generally we can consider the indicator function of any measurable subset of $\R$, which will give us a projection operator on $L^2(\R)$, and hence a determinantal point process corresponding to a projection kernel. In this setting, we 
have a continuum analogue of Theorem \ref{indic}, which says that whenever $f$ is the indicator of a finite union of compact intervals in $\R$, we have that $\pf$ is a rigid process.

\begin{theorem}
\label{contrig}
Let $f:\R \to [0,1]$ be an indicator function of a finite union of compact intervals. Then the determinantal point process $\pf$ in $\R$ is ``rigid'' in the following sense. Let $U \subset \R$ be a bounded interval, and let $\o$ be the point configuration sampled from the distribution $\pf$. Define the restricted point configurations $\oin=\o_{|U}$ and $\ou=\o_{|U^{\complement}}$. Let $|\oin|$ be the number of points of $\o$ in $U$. Then there exists a measurable function
\[ N:\mbox{ Point configurations in } U^{\complement} \to \N \cup \{0\}\] 
such that a.s. we have $|\oin|=N(\ou)$. This holds true for all bounded intervals $U$.    \newline
In particular, the continuum sine kernel process is ``rigid'' in the above sense. 
\end{theorem}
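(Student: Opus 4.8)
The plan is to adapt the variance-estimation argument used for Theorem \ref{indic}, now in the continuum and aimed at recovering the point count $|\oin|$ in $U$ rather than the occupation of a single site. First I would identify the kernel of $\pf$. Since $f$ is the indicator of a finite union of compact intervals, multiplication by $f$ is a projection on $L^2(\R)$, so its Fourier conjugate is a projection as well, and the resulting determinantal kernel is the translation-invariant projection kernel $K(x,y)=\hf(x-y)$ (for $f=1_{[-\pi,\pi]}$ this recovers $\hf(x-y)=\frac{\sin\pi(x-y)}{\pi(x-y)}$). The crucial analytic input is that the Fourier transform of the indicator of a bounded interval decays like $1/|\xi|$; summing over the finitely many intervals gives $|\hf(\xi)|\le C/|\xi|$, whence
\[ |K(x,y)|^2 \le \frac{C}{|x-y|^2}, \]
the bound being trivial near the diagonal, where $|K|$ stays bounded.

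With this decay in hand I would feed it into Proposition \ref{varlst}. For any $C_c^1$ function $\psi$ on $\R$, since the background measure is Lebesgue,
\[ \var\l[\int \psi\, d[\pf]\r] = \frac12 \iint |\psi(x)-\psi(y)|^2 |K(x,y)|^2 \, d\leb(x)\,d\leb(y) \le C\,\L(\psi,\psi), \]
where $\L$ is the scale-invariant form of \eqref{form}; the integrand $|\psi(x)-\psi(y)|^2/|x-y|^2$ is controlled near the diagonal by $\|\psi'\|_\infty^2$, so the bound is finite. This is the continuum counterpart of \eqref{est3}, obtained directly rather than through a Riemann-sum limit, so no auxiliary dilation by $L$ is needed here.

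Next I would reproduce the test-function construction from the proof of Theorem \ref{indic}. Fix $\ph \in C_c^\infty(\R)$ with $\ph\equiv 1$ on a compact interval containing $\bar{U}$. Because $\L$ is dilation invariant and $\L(\ph,\ph_{\la^{-1}})\to 0$ as $\la\to 0$ by Proposition \ref{decay}, one selects $\la=\la(n)<1$ so that $|\L(\ph,\ph_{\la^{-i}})|\le 2^{-i}$ for $1\le i\le n$ and sets $\Phi^n=\frac1n\sum_{i=1}^n \ph_{\la^{-i}}$. The same computation as in Theorem \ref{indic} gives $\L(\Phi^n,\Phi^n)\le C(\ph)/n$, hence $\var[\int\Phi^n\,d[\pf]]\le C/n$. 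Each dilation $\ph_{\la^{-i}}$ with $\la^{-i}\ge 1$ is still $\equiv 1$ on $\bar{U}$, and therefore so is every $\Phi^n$. Passing to the subsequence $n=2^m$ and invoking Borel--Cantelli yields
\[ \l| \int \Phi^{2^m}\, d[\pf] - \E\l[\int \Phi^{2^m}\, d[\pf]\r] \r| \to 0 \quad \text{a.s.} \]

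Finally I would extract $|\oin|$. Since $\Phi^{2^m}\equiv 1$ on $U$ and is compactly supported,
\[ \int \Phi^{2^m}\, d[\pf] = |\oin| + \int_{U^{\complement}} \Phi^{2^m}\, d[\pf], \]
where the second term is a finite, $\ou$-measurable sum, while $\E[\int\Phi^{2^m}\,d[\pf]] = \rho_1 \int \Phi^{2^m}\,d\leb$ is deterministic (the first intensity $\rho_1=K(x,x)=\hf(0)$ being constant). Solving for $|\oin|$ and letting $m\to\infty$, the right-hand side is a sequence of $\ou$-measurable quantities converging a.s. to the integer $|\oin|$; defining $N(\ou)$ as this limit gives the claim, and specializing to $f=1_{[-\pi,\pi]}$ yields rigidity of the sine kernel process. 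I expect the only genuine work to lie in the kernel-decay estimate and in the $\L$-vanishing construction of $\Phi^n$ (the latter carried over wholesale from Theorem \ref{indic} via Proposition \ref{decay}); the reconstruction step is otherwise softer in the continuum than in the discrete setting.
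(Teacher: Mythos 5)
Your proposal is correct and follows essentially the same route as the paper's proof: the variance identity of Proposition \ref{varlst} applied to the translation-invariant projection kernel $\hf(x-y)$, the decay bound $|\hf(\xi)|\le C/|\xi|$ giving $\var\l[\int \ph\, d[\pf]\r]\le C\,\L(\ph,\ph)$, the averaged dilations $\Phi^n$ built via Proposition \ref{decay}, Borel--Cantelli along a subsequence, and reconstruction of $|\oin|$ from the $\ou$-measurable part and the deterministic mean. The one imprecision is that the dilations $\ph_{\la^{-i}}(x)=\ph(\la^{i}x)$ fix the origin, so your claim that each $\ph_{\la^{-i}}$ remains $\equiv 1$ on $\bar{U}$ requires the set where $\ph\equiv 1$ to contain the origin as well as $\bar{U}$; the paper secures this by first invoking translation invariance to centre $U$ at the origin, and with that one-line adjustment (or by taking $\ph\equiv 1$ on an interval $[-M,M]\supset\bar{U}$) your argument coincides with the paper's.
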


\begin{proof}
By translation invariance, it suffices to take $U$ to be centred at the origin. Let $\ph$ be a $C_c^{\infty}$ function which is $\equiv 1$ in a neighbourhood of $U$. 
Then we have 
\begin{equation}
 \label{variance1}
\var \l[ \int \ph \, d[\pf] \r] = \frac{1}{2} \int \int |\ph(x) -\ph(y)|^2 |\hf(x-y)|^2 d\leb(x)d\leb(y).
\end{equation}
But for any compact interval $[a,b]$ we have $|\hat{1}_{[a,b]}(\xi)| \le c|\xi|^{-1}$, hence we have
\begin{equation}
 \label{variance2}
\var \l[ \int \ph \, d[\pf] \r] \le C\int \int \l(\frac{\ph(x) -\ph(y)}{x-y}\r)^2  d\leb(x)d\leb(y).
\end{equation}
Recall the form $\L(\cdot,\cdot)$ as in (\ref{form}). Proposition \ref{decay} implies that $\L(\ph,\ph_{\la^{-1}}) \to 0$ as $\la \to 0$.
For an integer $n>0$, let $0<\la<1$ be such that for $\ph$ as above, $|\L(\ph,\ph_{\la^{-i}})|\le 1/2^i$ for  $1 \le i \le n$. Define $\Phi^n=\l( \sum_{i=1}^n  \ph_{\la^{-i}} \r)/n$. We have $\Phi^n \equiv 1$ in a neighbourhood of $U$ in $\R$. 
Due to our choice of $\la$, we have $\var \l( \int \Phi^n d[\pf] \r)=\L(\Phi^n,\Phi^n)=O(1/n)$. From here, we proceed on similar lines to the proof of Theorem \ref{indic} and deduce the existence of $N$ as prescribed in the statement of Theorem \ref{contrig}. 
\end{proof}

We now prove Proposition \ref{decay}, which will complete the proof of Theorem \ref{indic}.

\begin{proposition}
\label{decay}
For a $C_c^1$ function $\ph$, we have $\L(\ph,\ph_{\la^{-1}})\to 0$ as $\la \to 0$.
\end{proposition}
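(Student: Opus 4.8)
The plan is to pass to the Fourier side and recognise $\L$ as the (polarised) $\dot H^{1/2}$ form, whose weight degenerates at zero frequency and thereby supplies the decay. Concretely, the Gagliardo identity for the fractional Sobolev seminorm of order $1/2$ on $\R$ gives a constant $c>0$ with
\[ \L(\psi_1,\psi_2)=c\int_\R |\xi|\,\widehat{\psi_1}(\xi)\,\overline{\widehat{\psi_2}(\xi)}\,d\xi \]
for real $C_c^1$ functions $\psi_1,\psi_2$; the quadratic case $\psi_1=\psi_2$ is precisely the stated relation between $\L(\psi,\psi)$ and the $H^{1/2}$ norm, and the bilinear form follows by polarisation. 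With the paper's scaling $\ph_{\la^{-1}}(x)=\ph(\la x)$ and Fourier convention, $\widehat{\ph_{\la^{-1}}}(\xi)=\mu\,\widehat{\ph}(\mu\xi)$ where $\mu:=\la^{-1}\to\infty$ as $\la\to 0$; thus $\widehat{\ph_{\la^{-1}}}$ concentrates at the zero frequency, exactly where the pairing weight $|\xi|$ vanishes. It is this separation of scales, and not the size of the seminorm, that forces the limit.

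First I would record the only integrability fact needed: writing $Q(\ph):=\int_\R |\xi|\,|\widehat{\ph}(\xi)|^2\,d\xi$, one has $Q(\ph)<\infty$ for every $\ph\in C_c^1$. Indeed $\ph'\in C_c^0\subset L^2$, so $\int_\R|\xi|^2|\widehat{\ph}(\xi)|^2\,d\xi=C\|\ph'\|_{L^2}^2<\infty$ (that is, $\ph\in H^1$), while $\widehat{\ph}$ is bounded near the origin; splitting at $|\xi|=1$ and using $|\xi|\le|\xi|^2$ for $|\xi|\ge 1$ gives $Q(\ph)<\infty$. The point to emphasise is that the borderline $|\xi|^{-1}$ pointwise decay of $\widehat{\ph}$ is \emph{not} what makes the form finite — the $L^2$-type control $\ph\in H^1\subset\dot H^{1/2}$ is.

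Next comes the decay itself, by a frequency split. Setting $\mu=\la^{-1}$, I would write
\[ \L(\ph,\ph_{\la^{-1}})=c\int_\R |\xi|\,\widehat{\ph}(\xi)\,\overline{\mu\,\widehat{\ph}(\mu\xi)}\,d\xi \]
and split the domain at $|\xi|=T$, choosing $T=T(\mu)=\mu^{-1/2}$, so that $T\to 0$ while $\mu T=\sqrt{\mu}\to\infty$. On each piece apply Cauchy--Schwarz in the measure $|\xi|\,d\xi$. On $\{|\xi|\le T\}$ the factor coming from $\ph$ is $\big(\int_{|\xi|\le T}|\xi||\widehat{\ph}|^2\big)^{1/2}\to 0$ as $T\to0$ (absolute continuity of $Q$), while the factor coming from $\ph_{\la^{-1}}$ equals $\big(\int_{|\eta|\le \mu T}|\eta||\widehat{\ph}(\eta)|^2\, d\eta\big)^{1/2}\le Q(\ph)^{1/2}$ after the substitution $\eta=\mu\xi$, hence is bounded. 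On $\{|\xi|>T\}$ the $\ph$ factor is bounded by $Q(\ph)^{1/2}$, while the $\ph_{\la^{-1}}$ factor equals $\big(\int_{|\eta|>\mu T}|\eta||\widehat{\ph}(\eta)|^2\, d\eta\big)^{1/2}\to 0$ since $\mu T\to\infty$ (tail of the convergent integral $Q(\ph)$). Both contributions tend to $0$, yielding $\L(\ph,\ph_{\la^{-1}})\to 0$.

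I expect the main obstacle to be conceptual rather than computational: because $Q(\ph)=\L(\ph,\ph)$ is invariant under $\ph\mapsto\ph_{\la}$, a single application of Cauchy--Schwarz gives only the uniform bound $|\L(\ph,\ph_{\la^{-1}})|\le Q(\ph)$ and no decay at all. The decay must instead be extracted from the disjointness of the effective frequency supports at the two scales, which is exactly why the split at an intermediate $T$ with $T\to 0$ and $\mu T\to\infty$ is needed. Any attempt to argue via a pointwise or $L^1$ bound on $\widehat{\ph}$ fails here, precisely because of its borderline $|\xi|^{-1}$ decay, so the estimate must remain within the $L^2$-weighted ($\dot H^{1/2}$) framework throughout.
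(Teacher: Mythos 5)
Your proof is correct, but it takes a genuinely different route from the paper. The paper works entirely in physical space: it exhibits an explicit dominating function $\gamma(x,y)$ (using $|\ph(x)-\ph(y)|\le\|\ph'\|_{\infty}|x-y|$ near the diagonal and $|\ph(x)-\ph(y)|\le 2\|\ph\|_{\infty}$ off it, all supported where $x$ or $y$ lies in the support of $\ph$), checks that $\gamma$ is integrable, and then applies dominated convergence, since for fixed $(x,y)$ the factor $\ph(\la x)-\ph(\la y)$ tends to $0$ as $\la\to 0$. Your argument instead passes to the Fourier side via the Gagliardo identity $\L(\psi_1,\psi_2)=c\int|\xi|\,\hat{\psi_1}(\xi)\overline{\hat{\psi_2}(\xi)}\,d\xi$ (with polarisation for real functions), and extracts decay from the separation of frequency scales: after the split at $T=\mu^{-1/2}$, each Cauchy--Schwarz piece pairs a vanishing factor (small-frequency mass of $\hat{\ph}$, respectively the tail of the convergent integral $Q(\ph)$) against a bounded one. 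Both proofs are complete; the checks you need (finiteness of $Q(\ph)$ for $C_c^1$, the substitution $\eta=\mu\xi$, $T\to 0$ with $\mu T\to\infty$) all go through. The paper's route is shorter and more elementary, requiring no Plancherel theorem, no polarisation, and no real-valuedness bookkeeping. Your route buys more: it is quantitative (with $T=\mu^{-1/2}$ and $\ph\in C_c^1$ one reads off an explicit rate, roughly $O(\la^{1/4})$, versus the paper's purely qualitative limit), it generalises immediately to other fractional orders and to $\R^d$, and your closing observation is genuinely illuminating --- since $\L(\psi_\la,\psi_\la)=\L(\psi,\psi)$ is scale invariant, no single application of Cauchy--Schwarz can ever yield decay, so the decay must come from frequency-support disjointness (or, in the paper's version, from pointwise vanishing of one factor), not from smallness of any seminorm.
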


\begin{proof}
We begin with the expression \begin{equation}\label{dec1} \L(\ph,\ph_{\la^{-1}})=\int \int \frac{\l(\ph(x)-\ph(y)\r) \l( \ph(\la x) - \ph(\la y) \r)}{(x-y)^2} \, d\leb(x)d\leb(y). \end{equation}
Fix $a>0$. Let $K$ be the support of $\ph$. We define the function 
\begin{equation}
\label{dec2} 
\gamma(x,y)= \left \{
                      \begin{array} {lll}
                    \|  \ph' \|_{\infty}^2 & \mbox{if } x \mbox{ or } y \in K \mbox{ and } |x-y|\le a \\
                    \frac{4\| \ph \|_{\infty}^2}{(x-y)^2} & \mbox{if } x \mbox{ or } y \in K \mbox{ and } |x-y|> a \\
                    0 & \mbox{otherwise }
                      \end{array}              
              \right.
\end{equation}
For $0<\la<1$, the integrand in (\ref{dec1}) is bounded in absolute value from above pointwise by $\gamma(x,y)$. To see this, note that the integrand in (\ref{dec1}) is non-zero only on the set $S=\{ (x,y): x \mbox{ or } y \in K \}$. On $S$, we bound the integrand from above as follows: for $(x,y)\in S$ such that $|x-y|\le a$ we use $|\ph(x)-\ph(y)|\le \|\ph'\|_{\infty}|x-y|$, for other $(x,y)\in S$ we use $|\ph(x)-\ph(y)|\le 2\|\ph\|_{\infty}$.

Since $K$ is a compact set, we have \begin{equation} \label{dec3} \int \int \gamma(x,y) \, d\leb(x) d\leb(y) <\infty, \end{equation} where we use the fact that $\int_{|t|>a}\frac{1}{t^2}dt = \frac{2}{a}$.  

(\ref{dec3}) enables us to use the dominated convergence theorem and let $\la \to 0$ in the integrand of (\ref{dec1}), whence $\L(\ph,\ph_{\la^{-1}}) \to 0$.
\end{proof}

Next we show that in any dimension $d$, whenever $f$ is not the indicator of a subset of $\t^d$, $\var \l[ \int \ph_L d[\pf] \r]$ blows up at least like $L^d$ as $L \to \infty$. So, the above approach cannot be used to decide questions on rigidity phenomena in such situations.

\begin{proposition}
\label{explo} 
Let $f:\t^d \to [0,1]$ not equal the  indicator function of some subset of $\t^d$ (up to Lebesgue-null sets). Then $\var \l[ \int \ph_L d[\pf] \r] = \Omega(L^d)$ as $L \to \infty$.
\end{proposition}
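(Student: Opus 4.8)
The plan is to revisit the variance computation of Proposition \ref{varlst}, but \emph{without} invoking the projection property, since for a general $f:\td\to[0,1]$ the Fourier conjugate $Q$ of multiplication by $f$ is merely a contraction. If one carries the computation (\ref{vlst1})--(\ref{vlst2}) only as far as the Hermitian determinantal identities $\rho_1(x)=K(x,x)$ and $\rho_2(x,y)=K(x,x)K(y,y)-|K(x,y)|^2$, which hold for \emph{any} self-adjoint determinantal kernel regardless of idempotency, one arrives (with $\mu$ the counting measure on $\Z^d$) at
\begin{equation*}
\var\l[\int\ph_L\,d[\pf]\r] = \sum_{j}\ph_L(j)^2\, K(j,j) - \sum_{j,k}\ph_L(j)\ph_L(k)\,|K(j,k)|^2,
\end{equation*}
where now the diagonal $K(j,j)$ can no longer be rewritten as $\sum_k |K(j,k)|^2$; this is precisely the step in Proposition \ref{varlst} that used the projection property.

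First I would exploit translation invariance. Since $Q$ is the Fourier conjugate of multiplication by $f$, its matrix entries are $K(j,k)=\hat{f}(j-k)$, so the diagonal is the constant $K(j,j)=\hat{f}(\mathbf{0})=\int_{\td} f\,d\leb$, which I will call $\bar f$. The defect measuring the failure of $f$ to be an indicator is then
\begin{equation*}
\del := \int_{\td} f(1-f)\,d\leb = \int_{\td} f\,d\leb - \int_{\td} f^2\,d\leb = \bar f - \sum_{m}|\hat{f}(m)|^2,
\end{equation*}
the last equality by Parseval. Because $0\le f\le 1$ we have $f(1-f)\ge 0$, and crucially $\del>0$ precisely when $f$ is not (Lebesgue-a.e.) an indicator function, which is the hypothesis.

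Next I would substitute $\bar f = \sum_m|\hat{f}(m)|^2 + \del$ into the diagonal term. The portion coming from $\sum_m|\hat{f}(m)|^2$ recombines with the double sum; after a change of index $m=j-k$ and symmetrizing in $j\leftrightarrow k$ (using $|\hat{f}(j-k)|^2=|\hat{f}(k-j)|^2$ for real $f$) this yields exactly the nonnegative Dirichlet-type form of the projection case (\ref{var}), while the leftover portion is the genuinely new term:
\begin{equation*}
\var\l[\int\ph_L\,d[\pf]\r] = \tfrac12\sum_{j,k}|\ph_L(j)-\ph_L(k)|^2\,|\hat{f}(j-k)|^2 + \del\sum_{j}\ph_L(j)^2 \ \ge\ \del\sum_{j}\ph_L(j)^2.
\end{equation*}
Finally, recognizing $\sum_{j\in\Z^d}\ph_L(j)^2=\sum_{j}\ph(j/L)^2$ as a Riemann sum of the continuous, compactly supported function $\ph^2$, I would conclude $L^{-d}\sum_j\ph(j/L)^2 \to \int_{\R^d}\ph^2\,d\leb>0$ as $L\to\infty$, whence $\var[\int\ph_L\,d[\pf]] \ge \del\,L^d(\|\ph\|_2^2+o(1)) = \Omega(L^d)$.

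I do not anticipate a serious obstacle here; the proof is essentially a bookkeeping refinement of Proposition \ref{varlst}. The only point demanding care is that the projection shortcut $K(j,j)=\sum_k|K(j,k)|^2$ is unavailable, so the identity must be tracked one step earlier and the defect $\del=\int f(1-f)$ isolated as a separate summand. The conceptual content is that this defect is strictly positive exactly when $f$ fails to be an indicator, and that it multiplies a full-dimensional Riemann sum growing like $L^d$, which forces the stated blow-up and thereby shows the variance method of Theorems \ref{indic} and \ref{contrig} cannot settle rigidity outside the indicator case.
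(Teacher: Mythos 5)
Your proof is correct and is essentially identical to the paper's own argument: the paper performs the same variance expansion via $\rho_1,\rho_2$, isolates the same decomposition into the nonnegative Dirichlet-type form plus the defect term $\bigl(\hf(0)-\sum_k|\hf(k)|^2\bigr)\sum_k\ph_L(k)^2$, identifies the defect with $\int_{\td} f(1-f)\,d\la_d>0$ via Parseval, and concludes with the same Riemann-sum limit $L^{-d}\sum_k\ph(k/L)^2\to\|\ph\|_2^2$.
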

\begin{proof}
Let $\rho_2(\cdot,\cdot)$ be the two point intensity function of $\pf$, given by the formula \[\rho_2(i,j)= \mbox{det}\l( \begin{array} {cc} \hat{f}(0) & \hat{f}(i-j) \\ \hat{f}(j-i) & \hat{f}(0) \end{array} \r) \] Let $\la_d$ denote the normalized Lebesgue measure on $\td$.
Using the above formula, we can write the variance in question as: 
\begin{align*}
\var \l[ \int \ph_L d[\pf] \r] &= \E \l(\int \ph_L d[\pf] \r)^2 - \l( \E \l[ \int \ph_L \,d[\pf] \r] \r)^2 \\
&= \sum_i \ph_L(i)^2 \hf(0) + \sum_{i,j} \ph_L(i) \ph_L(j) \l( \hf(0)^2 - |\hf(i-j)|^2 \r) - \sum_{i,j} \ph_L(i)\ph_L(j)\hf(0)^2 \\
&= \sum_i \ph_L(i)^2 \l( \hf(0) - \sum_j |\hf(i-j)|^2 \r) + \sum_{i,j} \l( \ph_L(i)^2 - \ph_L(i)\ph_L(j)\r)|\hf(i-j)|^2 \\
&= \l(\hf(0) - \sum_k |\hf(k)|^2 \r) \l( \sum_k \ph_L(k)^2  \r) + \frac{1}{2} \l( \sum_{i,j} \l| \ph_L(i) - \ph_L(j) \r|^2 |\hf(i-j)|^2 \r) \\
&\ge \l(\hf(0) - \sum_k |\hf(k)|^2 \r) \l( \sum_k \ph_L(k)^2  \r) \\
&= \l( \int_{\td} f(x) d\la_d(x) - \int_{\td} f(x)^2 d\la_d(x) \r)\l( \sum_k \ph_L(k)^2  \r). 
\end{align*}
In the last step we have used Parseval's identity: $\sum_k |\hf(k)|^2 = \int f(x)^2 d\la_d(x)$.
Note that since $0\le f \le 1$, we have $\bigg( \int_{\td} f(x) d\la_d(x) - \int_{\td} f(x)^2 d\la_d(x) \bigg)\ge 0$ with strict inequality holding if and only if $f$ is not the indicator of some subset of $\td$ (upto Lebesgue null sets). Finally, observe that as $L \to \infty$ we have
\[\frac{1}{L^d}\l( \sum_k \ph_L(k)^2  \r)=  \sum_k \frac{1}{L^d}\ph\l(\frac{k}{L}\r)^2  \to \|\ph \|_2^2 .\]
This completes the proof of the proposition.
\end{proof}

% The above proposition strongly indicates (although does not prove) that the determinantal probability measures $\pf$, when $f$ is not an indicator function, is insertion and deletion tolerant. 

\section{$\pf$ determines $f$}
% Our aim here is to study the following question from \cite{Ly} and \cite{LySt}:
% \begin{question}
% \label{q01}
% Suppose that $d=1$. Note that translation and flip of $f$ yield the same measure $\pf$. Does $\pf$ determine $f$ upto translation and flip ?
% \end{question}
% We answer the question in the affirmative for ``almost all''  functions:
In this Section we provide the proofs of Theorem \ref{deter} and Proposition \ref{valdist}
\begin{proof} [Proof of Theorem \ref{deter}]
Define $\F$ to be the subset of functions $f$ of $\e$ satisfying the following conditions: 
\begin{itemize}
 \item (i) Either $\hf(k)\ne 0 \forall k \in \Z$, or $f$ is a trigonometric polynomial of degree $N$, and $\hf(k)\ne 0$ for all $|k|\le N$.  
 \item (ii) For every $n \ge 3$ (and $n \le \text{ the degree }N$ in the  case of $f$ being  a trigonometric polynomial), we have $\arg(\hf(n)) - \arg(\hf(n-1))$ does not differ from $\arg(\hf(2)) - \arg(\hf(1))$ by an integer multiple of $\pi$. Further, $\arg(\hf(2)) - 2 \arg(\hf(1))$ is not an integer multiple of $\pi$.  Here we consider $\arg$ to be a number in $(-\pi,\pi]$.  
\end{itemize}
We claim that the complement of $\F$ in $\e$, denoted by $\G:=\e \setminus \F$,  is a meagre subset of $\e$, and for $f \in \F$, we have $\pf$  determines $f$ up to the rotation and flip. A meagre subset of a topological space is a set which can be expressed as a countable union of nowhere dense sets.

To show that $\G$ is meagre, we will show that it is a subset of a countable union of nowhere dense sets. Indeed, we can write $\G$ as:
\[\G \subset \cup_{i} A_i \cup_{n \ge 3} B_{n} \cup C \]
where
\[A_i:=\{ f \in \e: \hf(i)=0 \}, \]
\[B_{n}:=\{ f \in \e: \text{ Either } \hf(k)=0 \text{ for } k=1,2,n,n-1 \text{ or } \]\[ \text{Arg}(\hf(n))-\text{Arg}(\hf(n-1)) = \arg(\hf(2)) - \arg(\hf(1)) +t\pi, \, t \text{ an integer with } |t|\le 4 \},\]
\[C:=\{ \hf(1)=0 \text{ or } \hf(2)=0 \text{ or } \arg(\hf(2))-2\arg(\hf(1)) = t\pi, \, t \text{ an integer with } |t| \le 3 \}.\]
% and
% \[C:=\{ \hf(1)=0  \text{ or } \text{Arg}(\hf(1)) \text{ is a rational multiple of } 2\pi  \}.  \]
It is not hard to see that each $A_i$ and $B_{n}$ are closed sets in $L^{\infty}(\t)$, being defined by closed conditions on finitely many co-ordinates of the Fourier expansion (observe that $|\hat{f}(n)|\le \|f\|_{\infty}$ for each $n$, hence $f \to \hat{f}(n)$ is a continuous linear functional on $L^{\infty}(\t)$). The same holds true for $C$. It is also clear that none of the $A_i$-s or $B_{n}$-s or $C$ contain any $L^{\infty}(\t)$ ball (since even small perturbations in the relevant Fourier cofficients can lead us outside these sets), showing that they are nowhere dense. All these combine to prove that $\G$ is a meagre subset of $L^{\infty}(\t)$.

Let $f$ be a function in $\F$.
We begin with the Fourier  expansion $f$: \begin{equation} \label{exp}  f(x)= \sum_{-\infty}^{\infty} a_j e^{ijx} \end{equation}
where $i$ is the imaginary unit.

We make the following observations about the expansion (\ref{exp}). First, $f$ is real valued implies 
\begin{equation}
\label{c1}
a_{-j}=\overline{a_j} \text{ for all } j.                                                                                                     
\end{equation}
Secondly, letting $f_{\xi}=f(x+\xi)=\sum_{j=-\infty}^{\infty}a_j(\xi)e^{ijx}$ where $\xi \in (-\pi,\pi]$ and the addition is in $\t$, we have
\begin{equation}
 \label{s1}
a_j(\xi)=a_j e^{ij\xi}.
\end{equation}
Finally, setting $\wt{f}(x)=f(-x)=\sum_{j=-\infty}^{\infty}\wt{a}_j e^{ijx}$, we have 
\begin{equation}
 \label{s2}
\wt{a}_j=\overline{ a_j }.
\end{equation}
We want to recover the coefficients $a_j$ (up to the symmetries (\ref{c1}), (\ref{s1}) and (\ref{s2}))  from the measure $\pf$. We observe that the class of functions $\F$ is preserved under these symmetries. In particular, a trigonometric polynomial remains a trigonometric polynomial of the same degree and the same regularity property as demanded in the definition of the class $\F$.

In what follows, we will make certain choices at several steps, which will ``spend'' these symmetries. For instance, since we need to determine $f$ only up to a translation, therefore we can use the symmetry  (\ref{s1}) to make a choice of $\xi$ and fix the argument of a particular coefficient (provided it is non-zero). In the subsequent analysis, we use this with the coefficient $a_1$ and assume that it is positive real. Similarly, the flip symmetry (\ref{s2}) can be used to fix the value of the imaginary part of a particular (non-real) coefficient to be positive. In the argument that follows, we use this with the coefficient $a_2$.

To recover $f$, we begin by observing that $a_0=\ro_1$, and is therefore determined by $\pf$. 

Further, \begin{equation} \ro_2(0,n) =  \l| \begin{array} {cc} a_0 & a_n \\ a_{-n} & a_0 \end{array} \r| = a_0^2  - |a_n|^2. \end{equation}
This implies that $\pf$ determines $|a_n|$ for all $n$.

Recall that $a_1 \ne 0$ for $f \in \F$ (unless $f$ is a constant function; this follows from condition (i) defining $\F$). Using the symmetry (\ref{s1}), we  choose $a_1$ to be a positive real number, equal to its absolute value which is determined by $\pf$. If $f \in \F$ is a trigonometric polynomial of degree 1, then we are done. Else, we proceed as follows.

For any positive integer $n$, we have 
\begin{equation}
 \ro_3(0,1,n)= \l| \begin{array} {ccc}  a_0 & a_ 1 & a_n \\ a_{-1} & a_0 & a_{n-1} \\ a_{-n} & a_{-(n-1)} & a_0 \end{array} \r|
\end{equation}   
Expanding the right hand side along the first row, we have 
\[ \ro_3(0,1,n)= a_0 \l| \begin{array} {cc} a_0 & a_{n-1} \\ a_{-(n-1)} & a_0 \end{array} \r| - a_1 \l| \begin{array} {cc} a_{-1} & a_{n-1} \\ a_{-n} & a_0 \end{array} \r| + a_n \l| \begin{array} {cc} a_{-1} & a_0 \\ a_{-n} & a_{-(n-1)} \end{array} \r|. \]
Expanding the $2 \times 2$ determinants, we can simplify the above equation to \begin{equation} \label{c_2} \ro_3(0,1,n)=2a_1 \Re(a_n \overline{a_{n-1}}) + g(a_0,a_1,|a_{n-1}|,|a_n|), \end{equation}
where $g(w,x,y,z)$ is a polynomial in four complex variables. Since all quantities in (\ref{c_2}) except $\Re(a_n \overline{a_{n-1}})$ are known and $a_1>0$, we deduce that $\pf$ determines $\Re(a_n \overline{a_{n-1}})$ for all integers $n$. 

For $n=2$, we have $a_{n-1}=a_1$, and this implies that $\pf$ in fact determines $\Re(a_2)$. Since $|a_{2}|$ is also known, this implies that $\pf$ determines $|\Im(a_{2})|$, which is non-zero because of the assumption $\arg(\hf(2))-2\arg(\hf(1))$ is not an integer multiple of $\pi$, $\arg(\hf(1))$ being 0 because $\hf(1)$ is real. Using the symmetry (\ref{s2}), we choose $a_{2}$ such that $\Im(a_{2})>0$. 

We have now spent all the symmetries present in the problem, and our goal is to show that all the other $a_n$-s ($n \ge 0$) are determined exactly by $\pf$. $a_n$ for $n <0$ can then be found using the symmetry (\ref{c1}).

To this end, we apply induction. Suppose $n \ge 3$ and we know the values of $a_k, 0\le k \le n-1$. Since $f \in \F$, either $a_n=0$ or all such $a_k$ are non-zero. Computing $\ro_3(0,2,n)$ along similar lines to $\ro_3(0,1,n)$ we obtain 
\begin{equation} \label{c_3} \ro_3(0,2,n)=2 \Re(a_n \overline{a_2a_{n-2}}) + g(a_0,a_2,|a_{n-2}|,|a_n|), \end{equation}
where $g$ is as in (\ref{c_2}). 

If $|a_n|=0$, then we deduce that $f$ is a trigonometric polynomial of degree $n-1$, because $f \in \F$ (recall condition (i) defining $\F$).  In that case, we have already determined $f$. Else, $a_n\ne 0$, and we proceed as follows.

Since we already know $|a_n|$, we need only to determine $\text{Arg}(a_n)$. For any complex number $z \ne 0$, $|z|$ and $\Re(z)$ determines $\arg(z)$ (considered as a number in $(-\pi,\pi]$) up to sign. Hence, if we know $\Re(z\overline{z_1})$ and $\Re(z\overline{z_2})$ for two non-zero complex numbers $z_1$ and $z_2$ such that $\arg(z_1)$ does not differ from $\arg(z_2)$ by an integer multiple of $\pi$, then this data would be sufficient to determine $\arg(z)$. To be more elaborate, if $\theta,\phi_1,\phi_2$ are the arguments of $z,z_1$ and $z_2$ respectively, then the given data determines $\theta - \phi_1$ and $\theta - \phi_2$ upto signs (via the cosines of these quantities). Unless $\phi_1 - \phi_2$ is an integer multiple of $\pi$, this is enough to determine $\theta$. We apply  this to the situation we have in our hands, with $z=a_n$, $z_1=a_{n-1}$ and $z_2=a_2a_{n-2}$. None of them is $0$ by condition (i) defining $\F$, and the condition on the difference of arguments of $z_1$ and $z_2$ follows from 
condition (ii) defining $\F$. This enables us to determine $\arg(a_n)$, and hence $a_n$.

This completes the proof.
\end{proof}

\begin{remark}
The argument used in proof of Theorem \ref{deter} can also be used to recover the function $f$ if all its Fourier coefficients are real. E.g., if $f$ is the indicator function of an interval $A$ in $(-\pi,\pi]$, then we can ``rotate'' $f$ (symmetry \ref{s1}) so that 0 is in the centre of the interval $A$. Then all the Fourier coefficients of $f$ are real, and we can identify the interval $A$. This argument will also work for any set $A$ which has a point of reflectional symmetry when looked upon as a subset of $\t$.   
\end{remark}

\begin{proof} [Proof of Proposition \ref{valdist}]
Recall that the harmonic mean of a function $f:\td \to \R$ is defined as
\[\hm(f)=\bigg(\int_{\td} \frac{d\la_d(x)}{f(x)} \bigg)^{-1},\] where $\la_d$ is the Lebesgue measure on $\td$. 
The fact that we know the distribution $\pf$ implies that we know the distribution $\P^{tf}$ for any $0<t<1$, e.g. by performing an independent site percolation with survival probability $t$ on $\pf$. By taking complements, that is by considering the point process of the excluded points  (see \cite{LySt} for more details), this implies that we know the distribution $\P^{1-tf}$. But this enables us to recover the harmonic mean $\hm(1-tf)$ of the function $1-tf$ by the formula \[\hm(1-tf)= \text{Sup} \{ p \in [0,1]: \mu_p \preccurlyeq_{\text{f}} \P^{1-tf} \}.\] 
Here $\mu_p$ is the standard site percolation on $\Z$ with survival probability $p$, and $\mu_p \preccurlyeq_{\text{f}} \P^{1-tf}$ means that $\P^{1-tf}$ is uniformly insertion tolerant at level $p$, that is, $\P^{1-tf}[\o(0)=1|\ou]\ge p$ a.s. in $\ou$. For details, we refer to Definition 5.15 and Theorem 5.16 in \cite{LySt}. But \[ \hm(1-tf)^{-1} = \int_{\td}\frac{d\la_d(x)}{1-tf(x)}.\] By expanding the integral on the right as a power series in $t$, we can recover $\int_{\td}f^k(x) \, d\la_d(x)$ for each $k \ge 1$. But we have \[\int_{\td}f^k(x) \, d\la_d(x)= k \int_{0}^{ 1} \xi^{k-1} \nu_f(\xi) d\el(\xi),\] where $\el$ is the Lebesgue measure on $\R$, and $\nu_f$ is the value distribution of $f$, given by \[\nu_f(\xi)=\la_d(\{x \in \td: f(x) \ge \xi\}).\] Thus we have all the moments of the measure $\nu_f(\xi)d\el(\xi)$. Since $\nu_f(\xi)d\el(\xi)$ is a compactly supported measure on the interval $[0,1]$, it is uniquely determined by its moments (this follows, e.g., from the Weierstrass approximation theorem on the unit interval).
% its Fourier transform is finite everywhere, and the moments enable us to compute the Fourier transform.
%  The Fourier transform determines the measure, and hence $\nu_f$, uniquely. 
% , and hence the moment generating function of $\nu_f$. It only remains to show that the moment generating function of $\nu_f$ is finite in an interval. But for $0<t<1$, $e^{tf(x)}\le \frac{1}{1-tf(x)}$, so $HM(1-tf)^{-1}$ being finite implies that the moment generating function of $\nu_f$ is finite for $0< t < 1$ 
This enables us to recover the value distribution of $f$. 
\end{proof}

\section{Negative association for determinantal point processes}
In this section we take up the proof of Theorem \ref{cna}, and remark how it connects with more general notions of negative association. In doing so, we will make use of a discretization approach due to Goldman \cite{Go}, which Lyons in \cite{Ly1} calls the transference principle. To this end, we quote below Theorem 3.4 and Lemma 3.5 from \cite{Ly1}; they correspond to Proposition 12 and Lemma 16 in Goldman's original paper \cite{Go}.

\begin{theorem}
 \label{transfer1}
Let $(E,\mu)$ and $(F,\nu)$ be two Radon measure spaces on locally compact Polish sets. Let $(A_i)_{i=1}^n$ and $(B_i)_{i=1}^n$ be pairwise disjoint Borel subsets of $E$ and $F$ respectively. Let $\la_k \in [0,1]$ with $\sum_k \la_k <\infty$. Let $(\phi_k)$ and $(\psi_k)$ be orthonormal vectors in $L^2(E,\mu)$ and $L^2(F,\nu)$ respectively. Let $K:=\sum_k \la_k \phi_k \otimes \ol{\phi_k}$ and $L:=\sum_k \la_k \psi_k \otimes \ol{\psi_k}$. If $\langle  1_{A_i} \psi_j , \psi_k \rangle = \langle  1_{B_i} \psi_j , \psi_k \rangle$ for all possible $i,j,k$, then the $\P^K$ distribution of $(N(A_1),\cdots,N(A_n))$ is equal to the $\P^L$ distribution of $(N(B_1),\cdots,N(B_n))$. Here $\P^K$ and $\P^L$ are the probability measures corresponding to the determinantal point processes induced by $K$ and $L$ on $E$ and $F$ respectively.
\end{theorem}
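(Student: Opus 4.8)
The plan is to show that the joint law of the counting vector is encoded by a single Fredholm determinant which depends on the data only through the quantities appearing in the hypothesis. Concretely, I would compute the joint probability generating function $\E^K\big[\prod_{i=1}^n z_i^{N(A_i)}\big]$ for $(z_1,\dots,z_n)\in[0,1]^n$. Setting $f:=\sum_i z_i 1_{A_i}+1_{(\cup_i A_i)^{\complement}}$, so that $\prod_{x\in\Pi}f(x)=\prod_i z_i^{N(A_i)}$, the multiplicative-functional (hole-probability) formula for determinantal processes with trace-class kernel gives
\[ \E^K\Big[\prod_{i=1}^n z_i^{N(A_i)}\Big]=\det\big(I-(1-f)K\big), \]
where $1-f=\sum_i(1-z_i)1_{A_i}$ is a bounded multiplication operator supported on $\cup_i A_i$ and the right-hand side is a Fredholm determinant; see \cite{HKPV}. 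The assumption $\sum_k\la_k<\infty$ makes $K$ trace class, so $(1-f)K$ is trace class and the determinant is well defined; the same bound gives $\E^K[N(A_i)]\le \mathrm{tr}(K)<\infty$, so each $N(A_i)$ is a.s. finite.

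Next I would push the determinant onto the coefficient space. Writing $K=\Phi\Phi^*$ with $\Phi:\ell^2\to L^2(E,\mu)$ defined by $\Phi e_k=\sqrt{\la_k}\,\phi_k$, the identity $\det(I-T\Phi\Phi^*)=\det(I-\Phi^* T\Phi)$ (valid since $(1-f)K$ is trace class) reduces the problem to a determinant of an operator on $\ell^2$ whose matrix entries are
\[ \big[\Phi^*(1-f)\Phi\big]_{jk}=\sqrt{\la_j\la_k}\,\langle(1-f)\phi_k,\phi_j\rangle=\sqrt{\la_j\la_k}\,\sum_{i=1}^n(1-z_i)\,\langle 1_{A_i}\phi_k,\phi_j\rangle. \]
The crucial observation is that this matrix, and hence $\E^K[\prod_i z_i^{N(A_i)}]$, depends on the process only through the eigenvalues $\la_k$, the chosen $z_i$, and the compressed Gram matrices $\langle 1_{A_i}\phi_k,\phi_j\rangle$.

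Now comes the transference step. Performing the identical computation for $L=\Psi\Psi^*$ on $F$ with $\Psi e_k=\sqrt{\la_k}\,\psi_k$ produces the matrix with entries $\sqrt{\la_j\la_k}\sum_i(1-z_i)\langle 1_{B_i}\psi_k,\psi_j\rangle$. Reading the hypothesis as the matching of the $A_i$-compressed Gram matrices of the $\phi_k$ with the $B_i$-compressed Gram matrices of the $\psi_k$, i.e. $\langle 1_{A_i}\phi_j,\phi_k\rangle=\langle 1_{B_i}\psi_j,\psi_k\rangle$ for all $i,j,k$, the two $\ell^2$-matrices coincide entrywise (the $\la_k$ and the $z_i$ being shared). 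Hence the two Fredholm determinants agree, so $\E^K[\prod_i z_i^{N(A_i)}]=\E^L[\prod_i z_i^{N(B_i)}]$ for every $(z_i)\in[0,1]^n$. Since both vectors are $(\N\cup\{0\})$-valued and a.s. finite, equality of joint probability generating functions on $[0,1]^n$ forces equality of the joint laws, which is the claim.

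The main obstacle is operator-theoretic bookkeeping rather than probability: justifying the multiplicative-functional formula and the cyclicity identity $\det(I-AB)=\det(I-BA)$ in the Radon-measure setting, and verifying the trace-class hypotheses at each step. All of these are controlled by the single assumption $\sum_k\la_k<\infty$, which is exactly what makes $K$ and every operator built from it below trace class; without it one would have to argue convergence of the determinant expansion more delicately or restrict to bounded sets. A secondary point worth flagging is the precise form of the matching hypothesis: for the computation to close, the condition must compare the $\phi$-Gram matrices on $E$ against the $\psi$-Gram matrices on $F$, which is the reading adopted above.
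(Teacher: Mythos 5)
Your proposal is correct, but there is an important point of comparison to make: the paper does not prove this statement at all. Theorem \ref{transfer1} is quoted verbatim from Theorem 3.4 of Lyons \cite{Ly1}, which in turn corresponds to Proposition 12 of Goldman \cite{Go}, so there is no internal proof to measure your argument against; what you have supplied is a self-contained proof of a result the paper imports as a black box. Your route is essentially the standard one underlying Goldman's transference principle: write the joint probability generating function $\E^{K}\bigl[\prod_i z_i^{N(A_i)}\bigr]$ as the Fredholm determinant $\det\bigl(I-(1-f)K\bigr)$, factor $K=\Phi\Phi^{*}$ through the coefficient space $\ell^2$, and use $\det(I-AB)=\det(I-BA)$ to arrive at a matrix whose entries $\sqrt{\la_j\la_k}\,\sum_i(1-z_i)\langle 1_{A_i}\phi_k,\phi_j\rangle$ depend only on the shared data $(\la_k)$, $(z_i)$, and the compressed Gram matrices. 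The operator-theoretic caveats you flag are all safely handled by the hypothesis $\sum_k\la_k<\infty$: the map $\Phi$ is Hilbert--Schmidt, so both $(1-f)K$ and $\Phi^{*}(1-f)\Phi$ are trace class (each is a product of two Hilbert--Schmidt operators) and the cyclicity identity applies to such a pair; the multiplicative-functional formula is legitimate for arbitrary Borel sets, not just bounded ones, precisely because $K$ is globally trace class; and equality of multivariate generating functions on $[0,1]^n$ does determine the joint law of a.s.-finite $\N\cup\{0\}$-valued vectors, so no moment-determinacy issue arises (an advantage of the generating-function route over arguing via joint factorial moments and correlation functions, which is the other standard way to phrase this argument). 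You also correctly diagnosed and repaired the typo in the statement: as printed, the hypothesis $\langle 1_{A_i}\psi_j,\psi_k\rangle=\langle 1_{B_i}\psi_j,\psi_k\rangle$ is meaningless since $A_i\subset E$ while $\psi_j\in L^2(F,\nu)$; the intended condition, and the one your computation needs, is $\langle 1_{A_i}\phi_j,\phi_k\rangle=\langle 1_{B_i}\psi_j,\psi_k\rangle$ for all $i,j,k$.
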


\begin{proposition}
\label{transfer2}
Let $\mu$ be a Radon measure on a locally compact Polish space $E$. Let $(A_i)_{i=1}^N$ be pairwise disjoint Borel subsets of $E$. Let $\phi_k \in L^2(E,\mu)$ for $k \ge 1$ be orthonormal. Then there exists a denumerable set $F$, pairwise disjoint subsets $(B_i)_{i=1}^N$ of $F$, and orthonormal vectors $v_k \in \ell^2(F)$ such that $\langle 1_{A_i} \phi_j, \phi_k \rangle = \langle  1_{B_i} v_j , v_k \rangle $ for all possible values of $i,j,k$.
\end{proposition}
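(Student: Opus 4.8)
The plan is to exploit that the statement only constrains the numbers $\langle 1_{A_i}\phi_j,\phi_k\rangle = \int_{A_i}\phi_j\overline{\phi_k}\,d\mu$, together with the fact that the $A_i$ are pairwise disjoint. First I would adjoin the complementary atom $A_0 := E\setminus\bigcup_{i=1}^N A_i$, so that $\{A_0,A_1,\dots,A_N\}$ is a finite Borel partition of $E$. The idea is then to discretize each piece $L^2(A_i,\mu)$ separately and glue the pieces by an orthogonal direct sum, arranging matters so that each $1_{B_i}$ becomes an honest coordinate projection on the discrete side.

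For the per-atom step, fix $i\in\{0,1,\dots,N\}$ and regard the restrictions $\phi_k|_{A_i}$ as vectors of $L^2(A_i,\mu)$. Since $E$ is second countable its Borel $\sigma$-algebra is countably generated, and since $\mu$ is a Radon measure on a locally compact second countable (hence $\sigma$-compact) space it is $\sigma$-finite; consequently $L^2(A_i,\mu)$ is a separable Hilbert space. I would then fix an orthonormal basis of $L^2(A_i,\mu)$ and let $T_i\colon L^2(A_i,\mu)\to \ell^2(\N)$ be the associated coordinate isometry, setting $w_k^{(i)} := T_i(\phi_k|_{A_i})$. Because $T_i$ preserves inner products, $\langle w_j^{(i)},w_k^{(i)}\rangle_{\ell^2(\N)} = \int_{A_i}\phi_j\overline{\phi_k}\,d\mu$ for all $j,k$.

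To glue, take $F$ to be the disjoint union $\bigsqcup_{i=0}^N \N_i$ of $N+1$ labelled copies of $\N$, let $B_i$ be the $i$-th copy for $i=1,\dots,N$ (pairwise disjoint subsets of $F$), and define $v_k\in\ell^2(F)$ by declaring its restriction to $\N_i$ to equal $w_k^{(i)}$. Three checks remain, all routine. First, $\|v_k\|^2 = \sum_{i=0}^N \|\phi_k|_{A_i}\|_{L^2}^2 = \int_E|\phi_k|^2\,d\mu = 1$, so $v_k\in\ell^2(F)$. Second, $\langle v_j,v_k\rangle = \sum_{i=0}^N \int_{A_i}\phi_j\overline{\phi_k}\,d\mu = \int_E \phi_j\overline{\phi_k}\,d\mu = \delta_{jk}$, so the $v_k$ are orthonormal --- here the inclusion of the complementary atom $A_0$ is exactly what is needed for the atomic pieces to reassemble into the full inner product. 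Third, since $1_{B_i}$ acts on $\ell^2(F)$ as the coordinate projection onto $\ell^2(\N_i)$, we obtain $\langle 1_{B_i}v_j,v_k\rangle = \langle w_j^{(i)},w_k^{(i)}\rangle = \langle 1_{A_i}\phi_j,\phi_k\rangle$, which is the desired identity.

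The only genuinely substantive ingredient is the separability of $L^2(A_i,\mu)$, which is what reduces the realization of the positive semidefinite ``Gram data'' $\big(\int_{A_i}\phi_j\overline{\phi_k}\,d\mu\big)_{j,k}$ to a mere choice of coordinates; after that everything is bookkeeping. I expect the crux to be a design point rather than a technical obstacle, namely the decision to split along the atoms $A_0,\dots,A_N$ \emph{before} discretizing: working with a single global orthonormal basis of $L^2(E,\mu)$ would turn each multiplication operator $1_{A_i}$ into a general positive operator instead of a coordinate projection, and then no subset $B_i\subset F$ would reproduce $\langle 1_{A_i}\phi_j,\phi_k\rangle$ as $\langle 1_{B_i}v_j,v_k\rangle$. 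Partitioning first is precisely what makes the $1_{B_i}$ come out as diagonal projections.
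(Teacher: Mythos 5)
Your proof is correct. Note that the paper itself does not prove Proposition \ref{transfer2}: it is quoted from \cite{Ly1} (Lemma 3.5), corresponding to Lemma 16 of Goldman \cite{Go}, and your argument --- adjoining the complementary atom $A_0$, using $\sigma$-finiteness of the Radon measure and countable generation of the Borel $\sigma$-algebra to get separability of each $L^2(A_i,\mu)$, then gluing the per-atom coordinate isometries by an orthogonal direct sum so that each $1_{B_i}$ acts as a coordinate projection --- is exactly the standard transference argument used in those sources.
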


We now apply these results in order to establish Theorem $\ref{cna}$.

\begin{proof}[Proof of Theorem \ref{cna}]
First, we show that it suffices to consider the case where each $A_i$ is precompact. Because, we can consider an increasing sequence of compact sets $M_r$ such that $\cup_{r=1}^{\infty}M_r=E$. Now consider $A_i^r= A_i \cap M_r$. If we have the statement of the theorem for each $A_i^r$, then we can let $r \uparrow \infty$ and observe that $1_{N(A_i^r) \le m_i} \downarrow 1_{N(A_i) \le m_i}$ to deduce the theorem in the general case.

Henceforth, we assume that each $A_i$ is indeed precompact.

Denote $A=\cup_{i=1}^n A_i$. Denote by $K_A$ the compression of the integral operator $K$ to the set $A$. In other words, $K_A$ is an integral operator with the kernel $K_A(x,y)=1_A(x)K(x,y)1_A(y)$. So $K_A$ is a trace class operator. Let $\{\phi_i\}_{i=1}^{\infty}$ be an orthonormal eigenbasis for the integral operator $K_A$, with the eigenvalue $\la_k$  corresponding to the eigenfunction $\phi_k$. Thus, the integral kernel $K_A(x,y)$ has the  expansion \[K_A(x,y)=\sum_k \la_k \phi_k(x) \otimes \ol{\phi_k(y)}.\] Here each $\la_k \in [0,1]$ because $K$ must be a non-negative contraction.

By Proposition \ref{transfer2}, we have a countable set $F$,  orthonormal vectors $\{v_i\}_{i=1}^{\infty}$ in $\ell^2(F)$ and pairwise disjoint subsets $(B_i)_{i=1}^{n}$ of $F$  such that $\langle 1_{A_i} \phi_j, \phi_k \rangle = \langle  1_{B_i} v_j , v_k \rangle $ for all possible values of $i,j,k$. Define $L:\ell^2(F) \to \ell^2(F)$ to be the integral operator with the integral kernel \[L(x,y)=\sum_k \la_k v_k(x) \otimes \ol{v_k(y)}.\]  Thus, $L$ is a non-negative contraction on $\ell^2(F)$, and therefore, defines a determinantal point process on $F$. By Theorem \ref{transfer1}, this implies that the laws of $(N(A_1),\cdots, N(A_n))$ and  $(N(B_1),\cdots, N(B_n))$ are identical. 

But $\P^L$ is the law of a determinantal point process on a countable space $F$, hence by Theorem 6.5 in \cite{Ly}, $\P^L$ has negative associations. Namely,  \[\E_{\P^L}[f_1 \cdots f_n] \le \E_{\P^L}[f_1] \cdots \E_{\P^L}[f_n]\] for any collection $f_1,\cdots,f_n$ of non-negative functions (on point configurations on $F$)  that are measurable with respect to disjoint subsets of $F$, and that are either all increasing or all decreasing.

Setting $f_i=1_{N(B_i) \le m_i}$ in the above, we get \[\P^L \l( \bigcap_{i=1}^n (N(B_i)\le m_i)  \r)  \le \prod_{i=1}^n \P^L(N(B_i)\le m_i). \] But $(N(A_1),\cdots, N(A_n))$ has the same law as  $(N(B_1),\cdots, N(B_n))$, so this implies \[\P^K \l( \bigcap_{i=1}^n (N(A_i)\le m_i)  \r)  \le \prod_{i=1}^n \P^K(N(A_i)\le m_i) , \] as desired.
\end{proof}

We observe the following corollary of the proof of Theorem \ref{cna}:

\begin{corollary}
 \label{cnacor}
 Let $f:\Z_{\ge 0}^m \to \R$ and $g:\Z_{\ge 0}^n \to \R$ be two functions which are co-ordinate wise non-decreasing. Let $\{A_i\}_{i=1}^m$ and $\{B_j\}_{j=1}^n$ be disjoint bounded Borel sets. Let $N(A)$ denote the number of points of the determinantal point process $\Pi$ with a continuous kernel. Let $X_f=f(N(A_1),\cdots,N(A_m))$ and $X_g=g(N(B_1),\cdots,N(B_n))$.  Then we have \[\E \l[ X_f X_g \r] \le \E \l[ X_f \r] \E \l[ X_g \r] .\]
\end{corollary}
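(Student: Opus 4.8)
The plan is to run the same discretization argument used to prove Theorem \ref{cna}, but to retain general monotone functions of the counts in place of the threshold indicators $1_{N(A_i) \le m_i}$. The essential point is that both $X_f$ and $X_g$ are functions of the count vector alone, so any transfer that preserves the joint law of the counts automatically preserves the joint law of $(X_f, X_g)$; Lyons' negative association is already quoted (in the proof of Theorem \ref{cna}) in the general form valid for arbitrary non-negative increasing functions, not merely indicators, and that is exactly what we shall feed it.

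First I would reduce to non-negative $f$ and $g$. Since $f$ is coordinate-wise non-decreasing on $\Z_{\ge 0}^m$ it attains its minimum at the origin, so $f \ge f(\mathbf{0}) =: c$ and $\tilde f := f - c \ge 0$ is again coordinate-wise non-decreasing; similarly set $\tilde g := g - d$ with $d := g(\mathbf{0})$. Expanding $\E[(\tilde X_f + c)(\tilde X_g + d)]$ and $(\E[\tilde X_f] + c)(\E[\tilde X_g] + d)$ shows that the asserted inequality for $(f,g)$ is equivalent to the same inequality for $(\tilde f, \tilde g)$. Next I would truncate: replace $\tilde X_f$ by $\tilde X_f \wedge M$ and $\tilde X_g$ by $\tilde X_g \wedge M$, which remain non-negative, bounded, coordinate-wise non-decreasing functions of the counts; by monotone convergence it suffices to prove the inequality for each $M$ and let $M \to \infty$ on both sides. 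Thus we may assume $f, g$ are non-negative, bounded, and coordinate-wise non-decreasing.

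With these reductions in place I would invoke the transference machinery exactly as in the proof of Theorem \ref{cna}. The sets are bounded, hence (as in that proof) may be taken precompact. I would set $A = \bigl(\bigcup_{i=1}^m A_i\bigr) \cup \bigl(\bigcup_{j=1}^n B_j\bigr)$, compress $K$ to $A$ to obtain eigendata $(\la_k, \phi_k)$, and apply Proposition \ref{transfer2} to the full disjoint family $(A_1, \dots, A_m, B_1, \dots, B_n)$. This produces a countable set $F$, matching disjoint sets $(A_1', \dots, A_m', B_1', \dots, B_n') \subset F$, orthonormal $v_k$, and a discrete determinantal process $\P^L$ for which Theorem \ref{transfer1} identifies the $\P^K$-law of $(N(A_1), \dots, N(A_m), N(B_1), \dots, N(B_n))$ with the $\P^L$-law of the primed counts. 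Because $X_f$ and $X_g$ are functions of these counts, under this identification $X_f$ becomes a non-negative function, increasing (adding a point can only raise each count, and $f$ is coordinate-wise non-decreasing) and measurable with respect to $\bigcup_i A_i' \subset F$, while $X_g$ becomes a non-negative increasing function measurable with respect to the disjoint set $\bigcup_j B_j' \subset F$.

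Finally I would apply Lyons' negative association (Theorem 6.5 in \cite{Ly}) for $\P^L$ with the two non-negative increasing functions $X_f, X_g$ supported on the disjoint subsets $\bigcup_i A_i'$ and $\bigcup_j B_j'$, obtaining $\E_{\P^L}[X_f X_g] \le \E_{\P^L}[X_f]\,\E_{\P^L}[X_g]$, and transport this back via the identification of laws to conclude the claim for $\P^K$. I do not anticipate a genuine obstacle; the only points demanding care are the monotonicity bookkeeping (both $X_f$ and $X_g$ must be \emph{increasing}, so that the ``all increasing'' case of Lyons' inequality applies) and the shift-and-truncate step that keeps us within the non-negative, integrable regime that inequality requires.
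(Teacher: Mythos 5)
Your proposal is correct and follows essentially the same route as the paper: the paper's own proof of Corollary \ref{cnacor} is a one-line appeal to the transference principle (Theorem \ref{transfer1} and Proposition \ref{transfer2}), exactly as in the proof of Theorem \ref{cna}, combined with the discrete negative association inequality of Theorem 6.5 in \cite{Ly} applied to general non-negative increasing functions of the counts rather than threshold indicators. The only material you add is the shift-and-truncate reduction to non-negative bounded $f,g$, bookkeeping which the paper leaves implicit.
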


\begin{proof}
 The proof is on similar lines to Theorem \ref{cna}, using the transference principle and utilizing the analogous negative correlation inequality for discrete determinantal processes (as a special case of Theorem 6.5 in \cite{Ly})  .
\end{proof}

\textbf{Connection of Theorem \ref{cna} to more general notions of negative association}

It can be shown that Theorem \ref{cna} implies much more general notions of negative association. In the note \cite{Yo}, D. Yogeshwaran has shown that Corollary \ref{cnacor} implies negative correlation of increasing (or decreasing) functionals of the point process which are supported on disjoint subsets of $\R^d$, provided that these functionals are $\Pi$-continuous (i.e., their set of discontinuities has $\Pi$-measure 0).

\textbf{Acknowledgements.} The author would like to thank Russell Lyons, Vadim Gorin, Manjunath Krishnapur and Yuval Peres for stimulating discussions. The author is very grateful to the anonymous referee for an extremely careful reading of the paper and his/her valuable suggestions, in particular for the removal of a continuity assumption on the kernel in Theorem \ref{rigcomp}.

\end{document}